\newcites{eigenes}{Eigene Publikationen}
\newcommand{\Q}{\mathbb{Q}}
\newcommand{\Z}{\mathbb{Z}}
\newcommand{\PP}{\mathbb{P}}
\newcommand{\F}{\mathscr{F}}
\newcommand{\Gm}{\mathbb{G}_m}
\newcommand{\Ga}{\mathbb{G}_a}
\newcommand{\OO}{\mathcal{O}}
\newcommand{\G}{\mathscr{G}}
\newcommand{\LL}{\mathscr{L}}
\newcommand{\A}{\mathbb{A}}
\newcommand{\K}{\mathscr{K}}
\newcommand{\E}{\mathscr{E}}
\newcommand{\HH}{\mathscr{H}}
\newcommand{\EE}{\mathscr{E}}
\newcommand{\HOM}{\mathscr{H}om}
\newcommand{\END}{\mathscr{E}nd}
\newcommand{\Spec}{\textup{Spec}}
\newcommand{\KK}{\mathbb{C}(\!(t)\!)}
\newcommand{\Aut}{\textup{Aut}}
\newcommand{\rk}{\textup{rk}}
\newcommand{\rig}{\textup{rig\,}}
\newcommand{\GL}{\textup{GL}}
\newcommand{\Gal}{\textup{Gal}}
\newcommand{\MC}{\textup{MC}}
\newcommand{\piet}{\pi_1^{\textup{ét}}}
\newcommand{\specialcell}[2][c]{%
  \begin{tabular}[#1]{@{}c@{}}#2\end{tabular}}
\newcommand{\Ql}{\Q_\ell}
\newcommand{\Qlb}{{\overline{\Q}_\ell}}
\newcommand{\Perv}{\textup{Perv}}
\newcommand{\pr}{\textup{pr}}
\newcommand{\Rb}{\mathbf{R}}
\newcommand{\Sw}{\textup{Sw}}
\newcommand{\midd}{{\textup{mid}}}
\newcommand{\FI}{\F^{(0,\infty')}}
\newcommand{\U}{\mathbf{U}}
\newcommand{\IF}{\mathbb{F}}
\newcommand{\Itame}{I^{\textrm{tame}}}
\newcommand{\Ind}{\textrm{Ind}}
\newcommand{\Res}{\textrm{Res}}
\newcommand{\one}{\mathbbm{1}}
\newcommand{\Ktame}{K^{\textup{tame}}}
\theoremstyle{definition}
\newtheorem{defi}{Definition}[section]
\theoremstyle{plain}
\newtheorem{cor}[defi]{Corollary}
\newtheorem{thm}[defi]{Theorem}
\newtheorem{lem}[defi]{Lemma}
\newtheorem{prop}[defi]{Proposition}
\theoremstyle{remark}
\title{Wildly Ramified Rigid $G_2$-Local Systems}
\author{Konstantin Jakob}
\address{Massachusetts Institute of Technology, Department of Mathematics, 77 Massachusetts Ave, MA 02139}
\subjclass[2010]{20G41, 14F05}
\begin{document}
 
\maketitle

\begin{abstract}
In earlier work of the author rigid irregular connections with differential Galois group $G_2$ and whose slopes have numerator $1$ were classified and new rigid connections were constructed. The same construction can be carried out for $\ell$-adic local systems in the setting of positive characteristic. In this article we provide the results that are needed to obtain the classification of wildly ramified rigid $G_2$-local systems whose slopes have numerator $1$. The overall strategy of the classification is very similar but the methods needed to obtain some invariants differ. 
\end{abstract}

\section{Introduction}
Rigid local systems are local systems which are globally determined by their local monodromy. They have been studied in detail by Katz in \cite{Ka96} who proved that any such local system arises from a system of rank one by iterating tensor products with rank one local systems and middle convolution. To include equations or connections with irregular singularity, Arinkin has extended the result of Katz by additionally involving Fourier-Laplace transform of $D$-modules in \cite{Arinkin10}. This builds on work of Bloch and Esnault who prove in \cite{BlochEsnault04} that Fourier-Laplace transform preserves rigidity. The statement is that any rigid irreducible connection (with possibly irregular singularities) can be obtained from a connection of rank one by iterating Fourier-Laplace transforms. In the article \cite{Ja20}, this method of construction was used to give a classification of rigid irregular irreducible connections with differential Galois group $G_2$ and whose slopes have numerator $1$. \par
When working with $\ell$-adic sheaves on some open subset $U\subset \PP^1_k$ over the algebraic closure $k$ of a finite field $\IF_q$ of characteristic $p$ one can prove similar results. There are a lot of similarities and analogies in both settings, but unfortunately not everything translates directly from one to the other. The goal of this article is to introduce the necessary tools and methods to transfer the classification of \cite{Ja20} to the arithmetic setting.  \par
Let us explain the strategy of the classification. Rigid local systems can be identified through a cohomological invariant. An irreducible $\ell$-adic local system $\LL$ on $U\subset \PP^1$ is rigid if and only if 
\[\chi(\PP^1,j_*\END(\LL))=2\]
where $j_*$ denotes the non-derived direct image along the open embedding $j:U\hookrightarrow \PP^1$. For this reason we will call 
\[\rig(\LL):=\chi(\PP^1,j_*\END(\LL)\]
the index of rigidity. The fact that $\rig(\LL)=2$ implies rigidity of $\LL$ is essentially a consequence of Poincaré duality for $\ell$-adic sheaves. The other direction is more complicated and was recently proven by Fu in \cite{Fu19} using rigid analytic geometry. Let $S=\PP^1-U$. Using the Euler-Poincaré formula, one can compute the index of rigidity through local invariants as follows
\[\rig(\LL)=(2-|S|)\rk(\LL)^2+\sum_{s\in S} \Sw_s(\LL)+\dim(\LL)^{I_s}.\]
One of the main ingredients of the classification in \cite{Ja20} is a classic result of Levelt-Turittin for formal connections which allows to decompose any such connection into a direct sum of objects of the form 
\[[r]_*(\EE^\varphi \otimes R) \]
where $[r]$ denotes an $r$-fold covering of the formal punctured disc, $\EE^\varphi=(\KK, d+d\varphi)$ is a formal connection with an exponential solution and $R$ is some regular singular formal connection. For objects of this form one knows how to compute the invariants needed to compute the index of rigidity. We will see that the same is true for representations of the inertia group $I=\Gal(k(\!(t)\!)^{sep}|k(\!(t)\!))$ corresponding to sheaves of the form 
\[[r]_*(\LL_\psi(\varphi)\otimes \K) \]
where $\LL_\psi$ is the restriction of an Artin-Schreier sheaf (for some fixed non-trivial additive character $\psi:\IF_p\rightarrow \Qlb^*$), $\LL_\psi(\varphi)$ denotes pull-back of $\LL_\psi$ by the morphism given by the polynomial $\varphi\in t^{-1}k[t^{-1}]$ and $\K$ some tamely ramified sheaf on the punctured formal disc. In general, an irreducible representation of $I$ might not be of the above form, i.e. an analogue of the Levelt-Turittin decomposition does not exist in positive characteristic. There is however a weaker form proven by Fu in \cite{Fu10}. In the same article he raised the following question. Given an irreducible continuous $\Qlb$-representation $V$ of $I$, does there exist a tame character $\chi:I\rightarrow \Qlb^*$ such that $\chi\otimes V$ has finite image? \par
We answer this question positively, strengthening his result \cite[Proposition 0.5]{Fu10} to the following statement. 
\begin{prop} Let $\rho: I\rightarrow \GL(V)$ be an indecomposable continuous $\Qlb$-representation and denote by $P$ the wild ramification subgroup of $I$. Suppose that $\rho(P^p [P,P])=1$ and that the Swan conductor $\Sw(V)<p$. In this case, $V$ is isomorphic to the representation corresponding to $[r]_*(\LL_\psi(\varphi)\otimes \K)$
for an integer $r$ prime to $p$, $\varphi\in t^{-1}k[t^{-1}]$ and $\K$ some tamely ramified sheaf. 
\end{prop}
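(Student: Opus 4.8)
The plan is to analyse $\Res_P V$ by Clifford theory and then to recognise the wild part as an Artin--Schreier sheaf using the Swan bound. Write $K = k(\!(t)\!)$, so that $P = \Gal(\Ksep|\Ktame)$ is the wild inertia subgroup and $I/P = \Gal(\Ktame|K)$ is the tame quotient. First I would restrict $\rho$ to $P$. The hypothesis $\rho(P^p[P,P]) = 1$ forces $\rho(P)$ to be a finite elementary abelian $p$-group, so, since $\ell \neq p$ and $\Qlb$ contains $\mu_p$, the restriction $\Res_P V$ is semisimple and splits as a direct sum of characters $\chi_i\colon P \to \mu_p$. Conjugation by $I$ permutes the $\chi_i$, and because $\rho(P)$ is abelian this action factors through $I/P$. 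Indecomposability of $V$ then lets me invoke Clifford theory: the $\chi_i$ form a single $I/P$-orbit, and, setting $\chi := \chi_1$ and $H := \textup{Stab}_I(\chi) \supseteq P$, one has $V \cong \Ind_H^I W$, where $W$ is the $\chi$-isotypic summand of $\Res_H V$ and is itself indecomposable over $H$.

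Next I would pin down $r := [I:H]$, which is the cardinality of the $I/P$-orbit of $\chi$. As $I/P \cong \hat{\mathbb{Z}}^{(p')}$ is pro-(prime-to-$p$), the index $r$ is automatically coprime to $p$, and $H = \Gal(\Ksep|K_r)$ is exactly the inertia group of the tame degree-$r$ extension $K_r = k(\!(t^{1/r})\!)$. Under this identification $\Ind_H^I$ is the pushforward $[r]_*$ along $u \mapsto u^r = t$, so the proposition reduces to proving that $W \cong \LL_\psi(\varphi) \otimes \K$ for some $\varphi \in u^{-1}k[u^{-1}]$ and some tamely ramified $\K$.

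The crux is to extend the $H$-stable order-$p$ character $\chi$ of $P$ to a rank one character of $H$ of Artin--Schreier type. Since $H/P \cong \hat{\mathbb{Z}}^{(p')}$ has cohomological dimension one and $\chi$ takes values in the $p$-torsion group $\mu_p$, the obstruction to extending $\chi$ lies in $H^2(H/P, \mu_p) = 0$; hence $\chi$ extends to an order-$p$ character $\tilde\chi$ of $H$. By Artin--Schreier theory over $K_r$, the order-$p$ characters of $H$ correspond to classes in $K_r/(x^p-x)K_r$, and choosing the standard representative gives $\tilde\chi = \LL_\psi(\varphi)$ with $\varphi \in u^{-1}k[u^{-1}]$. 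Here the hypothesis $\Sw(V) < p$ enters: by compatibility of Swan conductors under the tame pushforward $[r]_*$, the quantity $\Sw(V)$ equals the Swan conductor $\rk(W)\cdot m$ of $W$ over $K_r$, where $m$ is the break of $\tilde\chi$, that is, the leading pole order of $\varphi$ in $u$; the bound forces $m < p$, so $p \nmid m$ and the pole order genuinely computes the break, ruling out the Artin--Schreier--Witt phenomena that obstruct the naive Levelt--Turrittin analogue in characteristic $p$. Finally, $W \otimes \tilde\chi^{-1}$ restricts to $\one$ on $P$ and therefore factors through the tame quotient $H/P$; setting $\K := W \otimes \tilde\chi^{-1}$, which is tamely ramified, I obtain $W \cong \LL_\psi(\varphi) \otimes \K$ and thus $V \cong [r]_*(\LL_\psi(\varphi) \otimes \K)$.

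The main obstacle I anticipate is this extension step together with its compatibility with the Swan bookkeeping: in positive characteristic an $H$-stable wild character of $P$ need not come from a rank one Artin--Schreier character of $H$, and it is precisely the two hypotheses that remove this difficulty. The condition $\rho(P^p[P,P]) = 1$ confines all wild characters to order $p$ (Artin--Schreier rather than Artin--Schreier--Witt), while $\Sw(V) < p$ keeps the relevant pole order prime to $p$. Verifying carefully that the extension $\tilde\chi$ is the asserted $\LL_\psi(\varphi)$, and tracking the breaks through $[r]_*$, is where I expect the real work to lie.
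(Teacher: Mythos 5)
Your argument is correct, but it takes a genuinely different route from the paper. The paper obtains this statement (Corollary \ref{LT-pos}) by quoting Fu's structure theorem (Theorem \ref{poscharlevelt}) and removing its finite-image hypothesis: the new input is Corollary \ref{finiteimage}, proved by restricting to the dense subgroup $J$, twisting to finite image there via Fu's Lemma 2.2, and extending the twisting character from $J$ to $I$ by a completeness argument --- which answers Fu's question as a byproduct. You bypass Fu's theorem entirely: the hypothesis $\rho(P^p[P,P])=1$ makes $\rho(P)$ a finite elementary abelian $p$-group (finiteness because $\rho(I)$ lands in some $\GL_n(\OO_E)$ and a pro-$p$ subgroup, $p\neq\ell$, injects into $\GL_n$ of the residue field), and then Clifford theory for the normal subgroup $P$, together with the fact that the open subgroups of $I$ containing $P$ are exactly the $I(r)$ with $(r,p)=1$, one for each index, gives $V\cong\Ind_{I(r)}^I W$ directly; note that this treats indecomposable $V$ natively, whereas the paper's passage from Fu's irreducible statement to the indecomposable case is left implicit. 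Two refinements to your write-up. First, at the extension step the obstruction class naturally lives in $H^2(I(r)/P,\Qlb^*)$ rather than $H^2(I(r)/P,\mu_p)$, so to guarantee an extension \emph{of order $p$} it is cleaner to apply Lemma \ref{inertiasplit} to $I(r)$ itself, write $I(r)=P\rtimes T$, and set $\tilde\chi(pt):=\chi(p)$, which is a homomorphism precisely because $\chi$ is $I(r)$-invariant (this also settles the continuity bookkeeping). Second, you slightly misattribute the role of the Swan bound: it is $\rho(P^p[P,P])=1$ alone that excludes Artin--Schreier--Witt characters, and your proof never genuinely needs $\Sw(V)<p$, since over the algebraically closed field $k$ one may always normalize the Artin--Schreier representative $\varphi$ modulo $\wp(x)=x^p-x$ so that every pole order is prime to $p$ (replace $au^{-pm}$ by $a^{1/p}u^{-m}$), after which the break equals the pole order unconditionally. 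So your Clifford-theoretic argument in fact proves the statement without the hypothesis $\Sw(V)<p$, which the paper carries along only because it invokes Fu's theorem as a black box; what the paper's longer route buys is Corollary \ref{finiteimage} itself, a result of independent interest.
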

In our setting this result suffices to conclude that the local monodromy of the rigid systems we will consider decomposes into these simple objects. We will compute tensor products and determinants of such representations and attach to them invariants which are similar to formal monodromy and exponential torus of a formal connection (these are invariants coming from differential Galois theory). This will in turn allow us to conclude the following classification theorem which is a generalization of the classification of tame rigid $G_2$-local systems by Dettweiler and Reiter in \cite{Dett10}. 
\begin{thm}\label{classloc} Let $k$ be the algebraic closure of a finite field of characteristic $p>7$. Let $\lambda_1,\lambda_2\in k$ such that $\lambda_1\neq \pm\lambda_2$ and let 
\[\chi, x, y, z, \varepsilon,\iota :\varprojlim_{(N,p)=1}\mu_N(k)\rightarrow \Qlb \]
be non-trivial characters such that $\chi$ is not quadratic, $z^4$ is non-trivial, $x,y,xy$ and their inverses are pairwise different and such that $\varepsilon$ is of order $3$ and $\iota$ is of order $4$. Denote by $\overline{\chi}$ the inverse of $\chi$, by $\one$ the trivial representation of rank one and by $-\one$ the unique character of order $2$. Every pair of local monodromies in the following list is exhibited by some irreducible rigid $\ell$-adic local system of rank $7$ on $\Gm$ with monodromy group $G_2(\Qlb)$.
\begin{center}
\begin{tabular}{ c c }
$0$ & $\infty$ \\
\hline \\
$\U(3)\oplus \U(3)\oplus \mathbbm{1}$ & \specialcell[c]{$[2]_*(\LL_\psi(\lambda_1 u^{-1})\otimes (\chi\oplus\overline{\chi}))$ \\ $\oplus\, [2]_*(\LL_\psi(2\lambda_1 u^{-1})) \oplus (-\mathbbm{1})$} \\ [15pt]
$-\U(2)\oplus -\U(2)\oplus \mathbbm{1}^{3} $ & \specialcell[c]{$[2]_*(\LL_\psi(\lambda_1 u^{-1})\otimes (\chi\oplus\overline{\chi}))$ \\ $\oplus\, [2]_*(\LL_\psi(2\lambda_1 u^{-1})) \oplus (-\mathbbm{1})$} \\ [15pt]
$x\oplus x\oplus\overline{x}\oplus\overline{x}\oplus\mathbbm{1}^3$ & \specialcell[c]{$[2]_*(\LL_\psi(\lambda_1 u^{-1})\otimes (\chi\oplus\overline{\chi}))$ \\ $\oplus\, [2]_*(\LL_\psi(2\lambda_1 u^{-1})) \oplus (-\mathbbm{1})$} \\ [15pt] 
\hline \\
$\U(3)\oplus \U(2)\oplus \U(2)$ & \specialcell[c]{$[2]_*(\LL_\psi(\lambda_1 u^{-1})) \oplus [2]_*(\LL_\psi(\lambda_2 u^{-1}))$ \\ $\oplus\,[2]_*(\LL_\psi((\lambda_1+\lambda_2) u^{-1}) \oplus (-\mathbbm{1})$} \\ [15pt]
\hline \\
$\iota \oplus \iota\oplus-\iota\oplus-\iota\oplus -\mathbbm{1}^2\oplus \mathbbm{1}$ & \specialcell[c]{$[3]_*(\LL_\psi(\lambda_1 u^{-1}))$ \\ $\oplus\,[3]_*(\LL_\psi(-\lambda_1 u^{-1}))\oplus\mathbbm{1}$} \\ [15pt]
\hline \\
$\U(7)$ & $[6]_*(\LL_\psi(\lambda_1 u^{-1}))\oplus-\mathbbm{1}$ \\ [10pt]
$\varepsilon\U(3)\oplus \varepsilon^{-1}\U(3)\oplus\mathbbm{1}$ & $[6]_*(\LL_\psi(\lambda_1 u^{-1}))\oplus-\mathbbm{1}$ \\ [10pt]
$z\U(2)\oplus z^{-1}\U(2)\oplus z^2\oplus z^{-2}\oplus\mathbbm{1}$ & $[6]_*(\LL_\psi(\lambda_1 u^{-1}))\oplus-\mathbbm{1}$ \\ [10pt]
$x\U(2)\oplus x^{-1}\U(2)\oplus \U(3)$ & $[6]_*(\LL_\psi(\lambda_1 u^{-1}))\oplus-\mathbbm{1}$ \\ [10pt]
$x\oplus y\oplus xy\oplus(xy)^{-1}\oplus y^{-1}\oplus x^{-1}\oplus \mathbbm{1}$ & $[6]_*(\LL_\psi(\lambda_1 u^{-1}))\oplus-\mathbbm{1}$ \\ [10pt]
\end{tabular}
\end{center}
Conversely, the above list exhausts all possible local monodromies of wildly ramified irreducible rigid $\ell$-adic local systems on open subsets of $\PP^1$ with monodromy group $G_2$ of slopes with numerator 1.
\end{thm}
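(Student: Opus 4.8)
The plan is to establish the two directions of Theorem~\ref{classloc} separately, in both cases reducing to the local monodromy representations analyzed above. For the converse (exhaustiveness) direction I would start from the cohomological criterion $\rig(\LL)=2$ and the Euler--Poincaré formula for the index of rigidity recalled in the introduction, and use that an irreducible local system with monodromy group $G_2$ is orthogonally self-dual of rank $7$ with trivial determinant. Since the system is wildly ramified with all slopes of numerator $1$, the preceding Proposition decomposes the local monodromy at each singular point into a direct sum of objects $[r]_*(\LL_\psi(\varphi)\otimes\K)$ with $r$ prime to $p$ and $\varphi\in t^{-1}k[t^{-1}]$, so the local datum at each point is recorded by a finite collection of triples $(r,\varphi,\K)$.

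For the enumeration I would attach to each such local representation the $\ell$-adic analogues of the formal monodromy and the exponential torus introduced above, and then translate the constraints imposed by the $G_2$-structure into relations among these invariants. Orthogonal self-duality pairs each wild summand with its dual and forces the determinant to be trivial; the nondegenerate invariant alternating trilinear form preserved by $G_2$ imposes additional fusion relations among the exponential parts $\varphi$; and the identity $\rig(\LL)=2$, evaluated through the local contributions $\Sw_s$ and $\dim^{I_s}$, bounds the number of singular points and the total Swan conductor. Together these should leave only finitely many numerical configurations, after which I would run the same case distinction as in the de Rham classification of \cite{Ja20} and discard the configurations that cannot be completed to a genuine $G_2$-datum. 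The hypotheses on the characters ($\chi$ not quadratic, $z^4$ nontrivial, $x,y,xy$ and their inverses pairwise distinct, $\varepsilon$ of order $3$, $\iota$ of order $4$) enter exactly here: they are the nondegeneracy conditions guaranteeing that the tame parts neither make the system reducible nor collapse its monodromy group to a proper subgroup of $G_2$.

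For the existence direction I would realize each entry of the table by the Katz--Arinkin construction, building the local system from a rank-one sheaf on an open subset of $\Gm$ by iterating middle convolution $\MC_\chi$, tensor products with rank-one Kummer and Artin--Schreier sheaves, and Fourier transform. Each of these operations preserves irreducibility and rigidity and has a computable effect on local monodromy, so I would propagate the local data through the construction and match it against the prescribed pair at $0$ and $\infty$; checking $\rk(\LL)=7$ and $\rig(\LL)=2$ is then bookkeeping. The remaining point is to pin down the monodromy group: the constructed sheaf is irreducible and orthogonally self-dual of rank $7$ with trivial determinant, hence lands in $\textup{SO}_7(\Qlb)$, and I would verify that it admits a nondegenerate invariant alternating trilinear form and contains a regular unipotent (or otherwise sufficiently generic) local element, which by the standard characterization of $G_2$ excludes $\textup{SO}_7$ and every intermediate irreducible subgroup and identifies the group as $G_2$.

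The main obstacle, as flagged in the introduction, is that these invariants do not transport verbatim from the differential setting. I expect the hardest step to be computing the tensor products and determinants of the induced representations $[r]_*(\LL_\psi(\varphi)\otimes\K)$ and extracting from them the $\ell$-adic substitutes for the exponential torus, since in positive characteristic the Levelt--Turrittin decomposition is available only in the weakened form of the Proposition. Establishing these tensor and determinant formulas explicitly, and checking that they reproduce the $G_2$-constraints exactly as the corresponding differential-Galois computations do in \cite{Ja20}, is what makes both the enumeration and the group-theoretic identification go through.
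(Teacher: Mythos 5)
Your proposal follows essentially the same route as the paper: the paper likewise combines the criterion $\rig(\LL)=2$ and the Euler--Poincaré formula with the Levelt--Turrittin-type decomposition of Corollary \ref{LT-pos}, the determinant and tensor-product formulas for $[r]_*(\LL_\psi(\varphi)\otimes\K)$, and the $\ell$-adic analogues of formal monodromy and exponential torus, then runs the case-by-case enumeration and deduces existence via the Katz--Arinkin algorithm, deferring the detailed case analysis to \cite[Section 6]{Ja20} exactly as you do. The one ingredient you do not single out is the single place where the transfer is not verbatim --- the modified proof of \cite[Lemma 5.3]{Ja20} restricting the pole orders of the $\varphi$ to $1$ or $2$, which is where the hypothesis $p>7$ enters --- but this is consistent with your correct identification of where the genuinely new local computations lie.
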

\textbf{Acknowledgements.} The author would like to thank Michael
Dettweiler \& Stefan Reiter for their support during the work on this project. The author was funded by the DFG SPP 1489.


\section{Rigid Local Systems and the Katz-Arinkin Algorithm}\label{rigid}
For the rest of this article let $k$ be the algebraic closure of a finite field of characteristic $p$ and fix a prime $\ell\neq p$. Let $j:U\hookrightarrow \PP^1_k$ be a non-empty open subset with complement $S$. An $\ell$-adic local system $\LL$ can be given as a continuous representation
\[\rho: \piet(U,\overline{u})\rightarrow \GL_n(\Qlb)\]
of the étale fundamental group with $\Qlb$-coefficients. For any $x\in S$ we denote by $I_x$ the inertia group at $x$ and we say that $\rho$ is rigid if and only if the collection $\{[\rho |_{I_x}]\}_{x\in S}$ of isomorphism classes of continuous $I_x$-representations determines $\rho$ up to isomorphism. \\
Recall that the index of rigidity of an $\ell$-adic local system is given by 
\[\rig(\LL)=\chi(\PP^1,j_*\END(\LL)).\]
We call the local system $\LL$ cohomologically rigid if $\rig(\LL)=2$. 
\begin{prop}[\cite{Fu19},  Thm 0.9 \& \cite{Ka96}, Thm 5.0.2] An irreducible local system $\LL$ on $j:U\hookrightarrow \PP^1$ is rigid if and only if it is cohomologically rigid. 
\end{prop}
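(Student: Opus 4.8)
The plan is to prove the two implications separately; the forward implication is a formal consequence of duality, while the converse is the substantial geometric input supplied by \cite{Fu19}.

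I would first treat the implication that cohomological rigidity forces rigidity, following \cite[Thm 5.0.2]{Ka96}. Because $\LL$ is irreducible, Schur's lemma identifies the global endomorphisms with the scalars, so $H^0(\PP^1, j_*\END(\LL)) = H^0(U,\END(\LL)) = \Qlb$ and $h^0 = 1$. The trace pairing on $\END(\LL) = \LL \otimes \LL^\vee$ exhibits this local system as self-dual, so the perverse middle extension $j_{!*}(\END(\LL)[1]) = (j_*\END(\LL))[1]$ is Verdier self-dual up to a Tate twist. Poincaré duality on $\PP^1$ then gives $H^2(\PP^1, j_*\END(\LL)) \cong H^0(\PP^1, j_*\END(\LL))^\vee$, whence $h^2 = 1$ as well. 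Therefore
\[ \rig(\LL) = h^0 - h^1 + h^2 = 2 - h^1, \]
so that $\rig(\LL) = 2$ is equivalent to the vanishing $H^1(\PP^1, j_*\END(\LL)) = 0$. Interpreting $H^1(\PP^1, j_*\END(\LL))$ as the tangent space to the space of deformations of $\LL$ that preserve the local monodromy classes $\{[\rho|_{I_x}]\}_{x \in S}$, its vanishing says that $\LL$ admits no nontrivial such deformation; combined with the description of the relevant moduli of irreducible local systems as a space whose dimension is governed by $h^1$, one concludes that the isomorphism class of $\LL$ is the unique one realizing its local monodromy, i.e.\ that $\LL$ is rigid.

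For the converse, that physical rigidity implies $\rig(\LL) = 2$, I would invoke \cite[Thm 0.9]{Fu19}, and I expect this direction to be the main obstacle. The difficulty is that rigidity only asserts that the isomorphism class of $\LL$ is pinned down by the collection $\{[\rho|_{I_x}]\}_{x \in S}$, whereas $\rig(\LL) = 2$ is the stronger infinitesimal statement $h^1 = 0$; a priori the moduli of local systems with the prescribed local monodromy could be positive-dimensional with all its points parametrizing isomorphic systems, or could carry nonreduced structure, so that physical rigidity does not visibly bound $h^1$. Fu's argument circumvents this by analyzing the moduli space as a rigid analytic space and showing that physical rigidity together with irreducibility forces the tangent space at $[\LL]$, namely $H^1(\PP^1, j_*\END(\LL))$, to vanish. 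Granting this, the computation $h^0 = h^2 = 1$ obtained above yields $\rig(\LL) = 2$, completing the equivalence.
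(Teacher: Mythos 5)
The paper itself offers no proof of this proposition --- it is quoted from \cite{Ka96} and \cite{Fu19} --- so the comparison is with the cited arguments, whose division of labor you correctly reproduce: duality for one direction, Fu's rigid analytic geometry for the other. Your converse direction is fine as a citation of \cite[Thm 0.9]{Fu19}, and your duality computation is correct as far as it goes: by Schur's lemma $h^0=1$, by self-duality of $\END(\LL)$ and Verdier duality for the middle extension $h^2=1$, hence $\rig(\LL)=2-h^1$. The gap is in your final step of the forward direction, where you pass from $H^1(\PP^1,j_*\END(\LL))=0$ to physical rigidity by interpreting $H^1$ as the tangent space of a moduli space of local systems with prescribed local monodromy. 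In the $\ell$-adic setting no such moduli space is available off the shelf: continuous representations of the profinite group $\piet(U,\overline{u})$ with $\Qlb$-coefficients do not form an algebraic (or naively analytic) family whose dimension is governed by $h^1$, and the delicacy of precisely this moduli-theoretic reasoning is why Fu needs rigid analytic geometry for the \emph{other} direction. Moreover, even granting a moduli interpretation, vanishing of the tangent space at $[\LL]$ only gives infinitesimal rigidity; physical rigidity is a global uniqueness statement, and a priori a different, non-nearby irreducible local system could realize the same collection $\{[\rho|_{I_x}]\}_{x\in S}$.

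Katz's actual proof of this direction avoids deformation theory entirely and you should substitute it for your heuristic. Given any irreducible $\LL'$ with the same local monodromy as $\LL$ at every point of $S$, all local terms entering the Euler--Poincar\'e formula for $j_*\HOM(\LL,\LL')$ --- the rank, the Swan conductors, and the dimensions of the inertia invariants --- depend only on the isomorphism classes of the restrictions to the inertia groups, hence coincide with those for $j_*\END(\LL)$; therefore $\chi(\PP^1,j_*\HOM(\LL,\LL'))=\rig(\LL)=2$. Consequently $h^0+h^2\ge 2$, so either $H^0(\PP^1,j_*\HOM(\LL,\LL'))=\Hom(\LL,\LL')$ is nonzero, or, since $H^2(\PP^1,j_*\HOM(\LL,\LL'))$ is dual to $H^0(\PP^1,j_*\HOM(\LL',\LL))$, we get $\Hom(\LL',\LL)\neq 0$; in either case irreducibility of both local systems promotes the nonzero map to an isomorphism, which is physical rigidity. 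Note that this argument uses the full strength of $\rig(\LL)=2$ against an \emph{arbitrary} competitor $\LL'$, which is exactly what your infinitesimal argument cannot see.
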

We would like to link the index of rigidity to invariants of the local monodromy in order to be able to compute it from knowledge of local information only. In order to do that we recall the local setting. Let $K=k(\!(t)\!)$ and $I$ its absolute Galois group, called the inertia group. We denote by $\Ktame$ the maximal tamely ramified extension of $K$ and by $P$ its absolute Galois group, which we will call the wild ramification subgroup. We have an exact sequence 
\[1\rightarrow P \rightarrow I\rightarrow \Itame \rightarrow 1\]
where the tame inertia $\Itame\cong \varprojlim_{(n,p)=1} \mu_n(k)$ is an inverse limit over $n$-th roots of unity in $k$ for $n$ prime to $p$. 
\begin{lem} \label{inertiasplit}
The sequence 
\[1\rightarrow P \rightarrow I\rightarrow \Itame \rightarrow 1\]
splits. In particular there is a subgroup $H\subset I$ isomorphic to $\Itame$.
\end{lem}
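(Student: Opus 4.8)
The plan is to derive the splitting from the profinite Schur--Zassenhaus theorem, exploiting that $P$ and $\Itame$ have coprime supernatural orders. First I would record the two structural facts. On one side, $P=\Gal(\Ksep|\Ktame)$ is a pro-$p$ group: since the residue field $k$ is algebraically closed, $\Ktame$ already contains every tame extension, so each finite subextension of $\Ksep|\Ktame$ is totally wildly ramified of degree a power of $p$, whence $|P|$ is a power of $p$ as a supernatural number. On the other side, the excerpt records $\Itame\cong\varprojlim_{(n,p)=1}\mu_n(k)$, whose order $\prod_{\ell\neq p}\ell^\infty$ is prime to $p$. Thus $P$ is a closed normal pro-$p$ subgroup of $I$ with pro-$p'$ quotient $\Itame$, and in particular $P$ is pronilpotent, hence prosolvable, so every form of the profinite Schur--Zassenhaus theorem applies.

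Concretely, I would produce the section level by level and then pass to the limit. Writing $I=\varprojlim_{N} I/N$ over the open normal subgroups $N\trianglelefteq I$, each finite quotient $I/N$ contains the image $PN/N$ of $P$ as a normal $p$-subgroup of index prime to $p$. The classical Schur--Zassenhaus theorem then furnishes, for every $N$, a non-empty finite set $C_N$ of complements to $PN/N$ in $I/N$. Reduction sends complements to complements (the image of a complement has order prime to $p$, hence meets the $p$-group $PN/N$ trivially, and still surjects onto the quotient), so the $C_N$ form an inverse system. A compatible family of complements then assembles into a closed subgroup $H\subseteq I$ mapping isomorphically onto $\Itame$, and the associated continuous map $s\colon\Itame\to I$ is the desired section.

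The main obstacle is exactly this passage from the finite to the profinite level: choosing complements independently at each stage yields no global section, and one must instead assemble a coherent family. This is where compactness enters. After replacing each $C_N$ by its eventual image $\bigcap_{N'\subseteq N}\mathrm{im}(C_{N'}\to C_N)$, a decreasing intersection of non-empty finite sets and hence non-empty, one obtains a surjective inverse system of non-empty finite sets, whose inverse limit is therefore non-empty; a point of this limit is precisely the sought continuous section. Equivalently, one may simply quote the profinite Schur--Zassenhaus theorem (see Ribes--Zalesskii, \emph{Profinite Groups}), which packages this inverse-limit argument and applies here because $P$ is pro-$p$ of order coprime to that of $\Itame$. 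The existence of the complement subgroup $H\cong\Itame$ is exactly the asserted splitting.
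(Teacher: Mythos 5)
Your proof is correct and follows the same route as the paper: identify $P$ as the pro-$p$ Sylow subgroup and $\Itame$ as the maximal pro-$p'$ quotient of $I$, then invoke the profinite Schur--Zassenhaus theorem (the paper cites Wilson, Prop.\ 2.3.3, where you cite Ribes--Zalesskii). The only difference is that you additionally unpack the proof of the profinite Schur--Zassenhaus theorem itself --- complements in finite quotients plus the compactness/inverse-limit argument --- which the paper simply delegates to the reference.
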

\begin{proof}
The group $\Itame$ is the maximal pro-$p'$ quotient of $I$ and $P$ is the pro-$p$-Sylow subgroup of $I$. Therefore the assertion follows from the profinite version of the Schur-Zassenhaus  Theorem \cite[Prop. 2.3.3.]{Wils98}.
\end{proof}
The wildness of the ramification can be measured by two kinds of invariants. They are the slopes (also called breaks) and the Swan conductor. 
\begin{thm}[Slope Decomposition, \cite{Ka88}, 1.1.] \label{slopedecomp}
Let $\rho:I\rightarrow \GL(V)$ be a continuous representation of $I$ with coefficients in $\Qlb$. There is a unique decomposition 
\[V=\bigoplus_{y\in \Q_{\ge 0}} V(y) \]
where only finitely many $V(y)$ do not vanish. These $y$ are called the $\textit{slopes}$ of $V$. The number $\Sw(V)=\sum_{y\in \Q_{\ge 0}} y\dim V(y)$ is called the Swan conductor of $V$ and is a non-negative integer. 
The representation $V$ is tame if and only if all of its slopes vanish or equivalently if $\Sw(V)=0$. 
\end{thm}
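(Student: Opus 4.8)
The plan is to reduce the statement to the representation theory of a finite $p$-group carrying the ramification filtration, and then to obtain the decomposition as the canonical grouping of the isotypic components of $\rho|_P$ according to their break. The first and indispensable reduction is a finiteness statement: since $P$ is pro-$p$ and $\rho$ is continuous with $\Qlb$-coefficients, the image $\rho(P)$ is finite. Indeed $\rho(P)$ is compact, hence after conjugation contained in $\GL_n(\OO_E)$ for a finite extension $E/\Q_\ell$; the reduction map to $\GL_n(\OO_E/\m_E)$ has pro-$\ell$ kernel, and since $\ell\neq p$ the pro-$p$ group $\rho(P)$ meets this kernel trivially. Thus $\rho|_P$ factors through a finite $p$-group $Q=\rho(P)$, and in particular $V$ is a semisimple $\Qlb[Q]$-module by Maschke's theorem.

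Next I would invoke the upper-numbering ramification filtration $\{P^{(x)}\}_{x>0}$ of $P$, using the following facts: each $P^{(x)}$ is closed and \emph{normal in all of $I$} (the upper numbering is intrinsic to $I$), the family is decreasing and separated with $\bigcap_{x>0}P^{(x)}=1$ and $P^{(x)}=P$ for small $x>0$, and its breaks are \emph{rational} by the Hasse--Arf theorem. For an irreducible continuous $P$-representation $W$ (factoring through a finite $p$-group) define its break $b(W)=\max\{x: P^{(x)}\text{ acts nontrivially on }W\}$, with $b(W)=0$ when $P$ acts trivially; since the image is finite this is attained at a jump of the filtration, hence $b(W)\in\Q_{\ge 0}$, and $b(W)=0$ exactly when $W$ is tame. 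Writing $P^{(y+)}=\overline{\bigcup_{x>y}P^{(x)}}$, one has $V^{P^{(y)}}=\{v:P^{(y)}v=v\}$ equal to the span of all subrepresentations of break strictly less than $y$, while $V^{P^{(y+)}}$ is the span of those of break at most $y$.

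Using the semisimple decomposition $V|_P=\bigoplus_i W_i$ into irreducibles, I would set $V(y)$ to be the sum of those $W_i$ with $b(W_i)=y$; equivalently $V(y)$ is the canonical complement of $V^{P^{(y)}}$ inside $V^{P^{(y+)}}$ built from the break-$y$ isotypic components. Because each $P^{(x)}$ is normal in $I$, every $V^{P^{(x)}}$ is an $I$-submodule, and these form an exhaustive increasing filtration with only finitely many jumps (as $Q$ is finite). The crux is then that the break grading $V=\bigoplus_y V(y)$ is \emph{canonical}: any $\gamma\in I$ normalises every $P^{(x)}$, so conjugation by $\gamma$ preserves the break of each subrepresentation and hence fixes each $V(y)$. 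This one observation simultaneously yields that the $V(y)$ are $I$-submodules (not merely $P$-submodules), that only finitely many are nonzero, and --- since the characterisation is basis-free --- uniqueness of the decomposition; relabelling breaks as slopes gives the statement.

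The step I expect to require genuine input rather than formal bookkeeping is the integrality of $\Sw(V)=\sum_y y\dim V(y)$, which is precisely the content of the Hasse--Arf theorem (the same theorem also supplies the rationality of the individual slopes used above). Granting it, the last assertion is immediate: $V$ is tame $\iff$ $P$ acts trivially $\iff V=V^P=V(0)$ $\iff$ the only nonzero slope space occurs at $y=0$ $\iff \Sw(V)=0$. In summary, I regard the passage from the a priori only $P$-stable break decomposition to an $I$-stable one --- forced by the normality in $I$ of the ramification subgroups together with the canonicity of the break grading --- as the conceptual heart of the argument, with Hasse--Arf as the external ingredient guaranteeing both rationality of the slopes and integrality of the Swan conductor.
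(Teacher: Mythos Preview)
The paper does not give its own proof of this theorem: it is stated with a citation to Katz \cite{Ka88}, Section~1.1, and no argument is supplied. Your proposal is essentially the standard proof that one finds in that reference --- reduce to a finite $p$-quotient of $P$ using that $\ell\neq p$, group the $P$-isotypic pieces by their break in the upper-numbering filtration, and use normality of each $P^{(x)}$ in $I$ to promote the $P$-stable break decomposition to an $I$-stable one --- so there is nothing to compare against.

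One small remark on attribution: the rationality of the individual breaks does not require Hasse--Arf. It follows already from the fact that $\rho|_P$ factors through a finite quotient, on which the upper-numbering filtration has finitely many jumps at rational values (the function $\psi$ is piecewise linear with rational data). Hasse--Arf enters only for the integrality of $\Sw(V)$, and even there the full statement for arbitrary $V$ needs an additional step (Brauer induction to characters, or equivalently the integrality of the Swan representation) beyond the abelian Hasse--Arf theorem itself. This is a matter of emphasis rather than a gap; the argument as written is correct.
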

We can now compute the Euler characteristic of an $\ell$-adic local system by means of local information using the Euler-Poincaré formula.
\begin{prop}[\cite{Fu15}, Corollary 10.2.7] Let $\LL$ be an $\ell$-adic local system on an open subset $j:U\hookrightarrow \PP^1_k$ corresponding to the representation $\rho$ of $\piet(U,\overline{u})$, let $S=\PP^1_k-U$ and $s=\#S$. We have
\[\chi(\PP^1,j_*\LL)=(2-s)\rk(\LL)-\sum_{x\in S} \left(\Sw(\rho_x)-\dim(\rho_x)^{I_x}\right). \]
\end{prop}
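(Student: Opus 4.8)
The plan is to deduce this from the Grothendieck--Ogg--Shafarevich formula together with the standard comparison between the extension by zero $j_!\LL$ and the non-derived direct image $j_*\LL$. First I would recall that for the lisse $\ell$-adic sheaf $\LL$ on the open curve $U\subset\PP^1$ the compactly supported Euler characteristic is computed by Grothendieck--Ogg--Shafarevich:
\[\chi_c(U,\LL)=\rk(\LL)\cdot\chi_c(U,\Qlb)-\sum_{x\in S}\Sw(\rho_x).\]
Here $\chi_c(U,\Qlb)=2-s$, since $U$ is $\PP^1$ (genus $0$) with $s$ points removed, as one checks on the constant sheaf ($\chi_c(\A^1)=1$, $\chi_c(\Gm)=0$, and so on). Because $j_!$ is extension by zero one has $H^i(\PP^1,j_!\LL)=H^i_c(U,\LL)$ for all $i$, and therefore $\chi(\PP^1,j_!\LL)=\chi_c(U,\LL)$.

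Next I would compare $j_!\LL$ with $j_*\LL$. Over $U$ the two sheaves coincide, while at a point $x\in S$ the stalk of $j_*\LL$ is the space of inertia invariants $(\rho_x)^{I_x}$ and the stalk of $j_!\LL$ vanishes. Hence the quotient $j_*\LL/j_!\LL$ is a skyscraper supported on $S$, giving a short exact sequence of sheaves on $\PP^1$
\[0\to j_!\LL\to j_*\LL\to\bigoplus_{x\in S}(i_x)_*(\rho_x)^{I_x}\to 0,\]
where $i_x:\{x\}\hookrightarrow\PP^1$ denotes the inclusion. The associated long exact cohomology sequence yields additivity of Euler characteristics,
\[\chi(\PP^1,j_*\LL)=\chi(\PP^1,j_!\LL)+\sum_{x\in S}\chi(\PP^1,(i_x)_*(\rho_x)^{I_x}),\]
and each skyscraper contributes only in degree $0$, so $\chi(\PP^1,(i_x)_*(\rho_x)^{I_x})=\dim(\rho_x)^{I_x}$.

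Combining these with the formula for $\chi_c(U,\LL)$ produces
\[\chi(\PP^1,j_*\LL)=(2-s)\rk(\LL)-\sum_{x\in S}\Sw(\rho_x)+\sum_{x\in S}\dim(\rho_x)^{I_x},\]
which is the asserted identity after merging the two local sums. The only genuinely deep input is the Grothendieck--Ogg--Shafarevich formula itself, which rests on the theory of Swan conductors and local ramification; here it is the cited result of Fu, so the remaining argument is the elementary bookkeeping of the $j_!$--$j_*$ comparison. The one point I would take care to justify is the identification of the stalk $(j_*\LL)_x$ with the inertia invariants $(\rho_x)^{I_x}$, which follows from the equivalence between lisse sheaves on the punctured disc around $x$ and continuous representations of $I_x$.
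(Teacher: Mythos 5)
Your proof is correct, and it is the standard derivation underlying the result: the paper itself offers no proof, simply citing Fu's Corollary 10.2.7, and your route --- Grothendieck--Ogg--Shafarevich for $\chi_c(U,\LL)=\chi(\PP^1,j_!\LL)$ together with the short exact sequence $0\to j_!\LL\to j_*\LL\to \bigoplus_{x\in S}(i_x)_*(\rho_x)^{I_x}\to 0$ and additivity of Euler characteristics --- is exactly the argument behind the cited statement. The one point you single out for care, the identification of the stalk $(j_*\LL)_x$ with the inertia invariants $(\rho_x)^{I_x}$, is indeed the only nontrivial bookkeeping step, and it holds for the reason you give.
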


For the rest of the article we fix a non-trivial additive character $\psi:\IF_p\rightarrow \Qlb^*$ and denote by $\LL_\psi$ the Artin-Schreier sheaf on $\A^1$ associated to the character $\psi$.

Let us briefly recall the definition of middle convolution with Kummer sheaves, cf. \cite[Chapter 2]{Ka96}. Denote by $\Perv(\A^1)$ the category of $\ell$-adic perverse sheaves on the affine line and let $K$ be a perverse sheaf on $\A^1$. Denote by $\LL_\chi$ the Kummer sheaf on $j:\Gm\hookrightarrow \A^1$ corresponding to the character $\chi: k^*\rightarrow \Qlb^*$ and by $m$ the addition map of $\A^1$. We have the two convolutions \[K*_! j_*\LL_\chi[1]=m_!(K\boxtimes j_*\LL_\chi[1])\] and 
\[K*_* j_*\LL_\chi[1]=m_*(K\boxtimes j_*\LL_\chi[1])\]
in the derived category $D^b_c(\A^1, \Qlb)$. There is a natural morphism 
\[K*_! j_*\LL_\chi[1]\rightarrow K*_*j_*\LL_\chi[1]\]
and we denote its image by $K*_\midd j_*\LL_\chi[1]$. We obtain the middle convolution functor
 \begin{align*}
MC_\chi:\Perv(\A^1)\rightarrow \Perv(\A^1) \\
K\mapsto K*_\midd j_*\LL_\chi[1].
\end{align*}
If $\G$ is an $\ell$-adic local system on $j:U\hookrightarrow \A^1$ and $K=j_*\G[1]$, the convolution $K*_\midd j_*\LL_\chi[1]$ will again be of the form $j_*\G'[1]$ for some $\ell$-adic local system $\G'$ on $U$. For ease of notation we will sometimes write $\MC_\chi(\G)=\G'$ in this situation. As mentioned before, the main theorem about the structure of tamely ramified local systems is the following.
\begin{thm}[\cite{Ka96}, Thm 5.2.1.]\label{tameranklow}
Let $\G$ be a tamely ramified cohomologically rigid $\ell$-adic local system on some non-empty proper open subset of $\A^1$ of rank at least $2$. Then there exists a tame $\ell$-adic local system $\LL$ of rank one and a character $\chi$ as above such that
\[\rk(\HH^{-1}(\MC_\chi(j_*(j^*\G\otimes j^*\LL)[1]))) < \rk(\G)\]
where $j:U\hookrightarrow \A^1$ is the embedding of an open subset of $\A^1$ where both $\G$ and $\LL$ are lisse and $\HH^i$ denotes the cohomology in degree $i$. 
\end{thm}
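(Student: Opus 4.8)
The plan is to prove the theorem by combining an explicit rank formula for middle convolution with the rigidity identity. First I would record how $\MC_\chi$ transforms the generic rank. Writing $S$ for the singular set of $\G$ on $\PP^1$, with $|S|=s$ and $r=\rk(\G)$, and choosing (after an automorphism of $\PP^1$) one point $x_0\in S$ to play the role of $\infty$, the generic stalk of the middle convolution is the middle cohomology $H^1$ of the twisted sheaf $\F_\lambda=\G\otimes\LL\otimes\LL_\chi(\lambda-x)$ on $\PP^1$. Since an irreducible middle extension has no $H^0$ or $H^2$, the Euler-Poincaré formula recalled above (in the tame case, where all Swan conductors vanish) computes $\dim H^1$ purely from local drops. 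Carrying this out gives
\[\rk(\MC_\chi(\G\otimes\LL))=\sum_{x\in S,\,x\ne x_0}\left(r-\dim(\G\otimes\LL)^{I_x}\right)-\text{mult}_\chi(A_{x_0}),\]
where $A_{x_0}$ is the local monodromy at $x_0$ and $\text{mult}_\chi$ denotes the multiplicity of $\chi$ as a tame eigenvalue there; the Euler characteristic bookkeeping for the extra singular point created by the Kummer twist at $\lambda$ is what produces the overall $-r$ normalization.

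Next I would optimise the free choices. Tensoring with a tame rank one $\LL$ rescales each local monodromy, so the finite drop $r-\dim(\G\otimes\LL)^{I_x}$ can be made as small as $r-\mu_x$, where $\mu_x$ is the largest eigenvalue multiplicity of the local monodromy at $x$; and since rescaling only permutes eigenvalues, the optimal choice of $\chi$ makes $\text{mult}_\chi(A_{x_0})$ equal to $\mu_{x_0}$. These optimisations are compatible with the single constraint $\prod_{x\in S}\ell_x=1$ on the monodromies of $\LL$, because $\chi$ absorbs the remaining degree of freedom at $x_0$. The resulting minimal rank is
\[(s-1)r-\sum_{x\in S}\mu_x,\]
which is manifestly independent of the chosen center $x_0$.

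It then remains to show this quantity is $<r$, i.e. that $\sum_{x\in S}\mu_x>(s-2)r$. Here I would invoke two inputs. First, the purely linear-algebraic inequality $\dim Z(T)\le \mu(T)\cdot r$ for any $T\in\GL_r$, valid because the centralizer of $T$ is contained in that of its semisimple part, whose dimension is $\sum_i m_i^2\le(\max_i m_i)\sum_i m_i=\mu(T)r$. Second, the rigidity hypothesis $\rig(\G)=2$, which by the Euler-Poincaré formula applied to $\END(\G)$ reads $\sum_{x\in S}\dim Z(A_x)=2+(s-2)r^2$. Combining these gives $2+(s-2)r^2\le r\sum_{x}\mu_x$, so $\sum_x\mu_x\ge(s-2)r+2/r$, and since the left-hand side is an integer this forces $\sum_x\mu_x\ge(s-2)r+1$. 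Hence the minimal rank is at most $r-1<r$, as desired.

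The main obstacle is not the numerics but the achievability of the optimum within the hypotheses: I must exhibit an actual pair $(\LL,\chi)$ with $\chi\ne\one$ for which $\F_\lambda$ is a genuine irreducible middle extension, so that the vanishing $H^0=H^2=0$ and hence the rank formula are valid. This is where rank $\ge2$, irreducibility, and the fact that $U$ is a proper open subset enter: they exclude the degenerate cases in which $\G$ is geometrically trivial or Kummer-induced, where the convolution would fail to be a local system or the forced character would be trivial. The slack $2/r$ in the inequality leaves room to replace the top eigenvalue at one point by a second-best one should the naive optimum force $\chi=\one$, still keeping the output rank strictly below $r$; verifying this perturbation and the irreducibility of the convolution is the routine but essential remaining work.
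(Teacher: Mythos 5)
A preliminary remark: the paper offers no proof of this statement --- it is quoted verbatim from Katz \cite[Thm 5.2.1]{Ka96} --- so the comparison is really with Katz's argument, and your skeleton is indeed his: compute $\rk(\MC_\chi)$ via the Euler--Poincar\'e formula applied to the twisted middle extension, optimise over the rank one twist $\LL$ and the convolution character $\chi$, and play the resulting bound against the rigidity identity $\sum_{x\in S}\dim Z(A_x)=(s-2)r^2+2$. Within that correct skeleton there is one genuine gap: you conflate algebraic with geometric multiplicities, and the two halves of your argument use opposite readings. In the Euler--Poincar\'e bookkeeping the local term $\dim(\G\otimes\LL)^{I_x}$ equals $\dim\ker(A_x\ell_x-1)$, the \emph{number of Jordan blocks} with the relevant eigenvalue (and likewise your $\mathrm{mult}_\chi(A_{x_0})$ must be $\dim\ker(A_{x_0}-\chi)$). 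So twisting can shrink the drop at $x$ only to $r-n_x$ with $n_x:=\max_\alpha\dim\ker(A_x-\alpha)$, not to $r-\mu_x$ with $\mu_x$ the largest algebraic multiplicity: for $A_x=\U(r)$ a single unipotent block one has $\mu_x=r$ yet the drop is at least $r-1$ for every twist. Meanwhile your centralizer inequality, proved by passing to the semisimple part, gives $\dim Z(A)\le r\mu_x$ with the \emph{algebraic} $\mu_x$; since $n_x\le\mu_x$ this is the useless direction, and the chain $2+(s-2)r^2\le r\sum_x n_x$ does not follow from what you proved. The argument is salvageable because the sharper inequality $\dim Z(A)\le r\,n_x$ is in fact true, but it requires the Jordan-form computation $\dim Z(A)=\sum_\alpha\sum_{i,j}\min(b^\alpha_i,b^\alpha_j)\le\sum_\alpha n_\alpha m_\alpha\le \bigl(\max_\alpha n_\alpha\bigr)r$, not containment in $Z(A_{ss})$. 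With geometric multiplicities used consistently, your numerics close exactly as written: $\sum_x n_x\ge (s-2)r+2/r$, hence $\ge (s-2)r+1$ by integrality, hence output rank $\le r-1$.

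Your handling of the degenerate case is also not sound as sketched. You propose, should the optimum force $\chi=\one$, to substitute the ``second-best'' eigenvalue at $x_0$, absorbed by the slack $2/r$. But after integrality the slack is exactly $1$, while switching eigenvalues can cost $n_{x_0}-n'_{x_0}>1$; worse, if the twisted $A_{x_0}$ is unipotent there is no second eigenvalue at all, and unipotent local monodromy is ubiquitous among rigid tame systems (see the $\U(7)$, $\U(3)\oplus\U(3)\oplus\one$ entries in Theorem \ref{classloc}). The correct resolution, which is Katz's, is that $\chi=\one$ can never be forced: if $1$ were a maximizing eigenvalue of the twisted monodromy at $x_0$, then with maximizing choices at the finite points the Euler--Poincar\'e formula would give $\chi(\PP^1,j_*(\G\otimes\LL))=(2-s)r+\sum_x n_x\ge 1>0$, whereas an irreducible nonconstant middle extension of rank $\ge 2$ has $H^0=H^2=0$ and hence nonpositive Euler characteristic. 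This contradiction, rather than a perturbation, is what rules out $\chi=\one$ (and the same irreducibility and rank $\ge 2$ hypotheses dispose of the punctual and Kummer-induced degenerations you correctly flag as needed for the rank formula).
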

We wish to extend this theorem to include $\ell$-adic local systems which are wildly ramified. To that end we recall the definition of the Fourier transform for $\ell$-adic sheaves. Let $A=\A^1_t$ be the affine line with coordinate $t$ and dual $A'=\A^1_{t'}$ and denote by 
\[m:A\times_k A'\rightarrow \Ga \]
the canonical pairing. Let $\pr:A\times_k A'\rightarrow A$ and $\pr':A\times_k A'\rightarrow A'$ be the projections. The \textit{Fourier transform} with respect to the non-trivial character $\psi:k\rightarrow \Qlb$ is the functor
\[\F_\psi : \Perv(A,\Qlb)\rightarrow \Perv(A',\Qlb) \]
given by
\[\F_\psi(K)=\Rb \pr_!'(\pr^*K\otimes \LL_\psi(m))[1] \]
for $K$ an object in $\Perv(A)$.  One of the most important features of the Fourier transform in dimension $1$ is the principle of stationary phase. It will allow us to control the behvaviour of local monodromy after Fourier transform.  To state it, we introduce the following notation. Let $\eta_s$ be the formal punctured disc around $s$ and $\eta_{\infty'}$ be the formal punctured disc around $\infty'$ (i.e. in the coordinate after Fourier transform). We denote by $\F_{\psi}^{(0,\infty')}$ Laumon's local Fourier transform as defined in \cite[Def 2.4.2.3]{Laumon87}. 
\begin{prop}[\cite{Ka90}, Corollary 7.4.2]\label{statphase}
Let $k$ be the algebraic closure of a finite field, $j:U\hookrightarrow \A^1$ be an open subset, $S$ its complement, $\LL$ a lisse irreducible sheaf on $U$ and $K=j_*\LL[1]$ its middle extension. Furthermore let $K'=\F(K)$ and $\LL'=\mathscr{H}^{-1}(K'|_{U'})$ where $U'$ is the maximal open subset of $\A^1$ where $K'$ has lisse cohomology sheaves. We then have
\[\LL'|_{\eta_{\infty'}} =  \bigoplus_{s\in S} \left(\F_{\psi}^{(0,\infty')}(\LL|_{\eta_s}/\LL|_{\eta_s}^{I_s})\otimes  \LL_\psi(sx')\right) \oplus \F_\psi^{(\infty,\infty')}(\LL_{\eta_\infty}). \]
\end{prop}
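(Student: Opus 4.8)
The plan is to deduce the formula from Laumon's theory of the \emph{local} Fourier transform, reducing the global computation of $\F_\psi(K)$ near $\infty'$ to a direct sum of purely local contributions attached to the singularities of $\LL$. First I would restrict attention to the formal punctured disc $\eta_{\infty'}$ and regard $\LL'|_{\eta_{\infty'}}$ as a continuous representation of the inertia group $I_{\infty'}$. Since $K=j_*\LL[1]$ is a middle extension that is lisse on $U$, its only singularities in $\PP^1$ are the finite points $s\in S$ together with $\infty$, and $K'=\F_\psi(K)$ has lisse cohomology sheaves away from a finite set; thus passing to $\LL'=\mathscr{H}^{-1}(K'|_{U'})$ loses no information in a formal neighbourhood of $\infty'$, and it suffices to describe the germ of $\F_\psi(K)$ there.

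The core step is Laumon's stationary phase principle: the formal germ of $\F_\psi(K)$ at a point of $A'\cup\{\infty'\}$ is computed by applying the local Fourier transform functors $\F_\psi^{(a,b)}$ to the formal germs of $K$ at the singularities $a\in \A^1\cup\{\infty\}$. Only two families of local transforms land at $b=\infty'$: the transforms $\F_\psi^{(s,\infty')}$ issuing from the finite singular points $s\in S$, and the transform $\F_\psi^{(\infty,\infty')}$ issuing from the point at infinity. Invoking this decomposition, I would write $\bigl(\F_\psi(K)\bigr)|_{\eta_{\infty'}}$ as the direct sum of the $\F_\psi^{(s,\infty')}(\LL|_{\eta_s})$ over $s\in S$ together with $\F_\psi^{(\infty,\infty')}(\LL|_{\eta_\infty})$. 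Here the slope bookkeeping enters: the finite points contribute all of their ramification to $\infty'$, whereas the germ at $\infty$ contributes only its slope-$>1$ part, the slope-$<1$ part being carried to finite singularities of $\F_\psi(K)$ by $\F_\psi^{(\infty,0')}$ and hence invisible at $\infty'$. This is why exactly $\F_\psi^{(\infty,\infty')}$ appears in the last summand.

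Next I would identify each term explicitly. For the finite points, translation by $s$ in the $t$-coordinate is intertwined by the Fourier transform with tensoring by $\LL_\psi(sx')$ in the dual coordinate, which after transporting the germ at $s$ to a germ at $0$ yields the identity $\F_\psi^{(s,\infty')}(V)=\F_\psi^{(0,\infty')}(V)\otimes\LL_\psi(sx')$. Moreover $\F_\psi^{(0,\infty')}$ annihilates the unramified, i.e.\ the $I_s$-invariant, part of $V$: a constant sheaf near $s$ Fourier-transforms to a punctual sheaf supported at the origin and contributes nothing at $\infty'$. This is precisely why the germ may be replaced by the quotient $\LL|_{\eta_s}/\LL|_{\eta_s}^{I_s}$, and assembling these contributions reproduces the stated right-hand side.

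The main obstacle is the stationary phase principle itself, namely the local-global compatibility asserting that the global transform splits into the local functors $\F_\psi^{(a,b)}$. Proving this from scratch requires the full construction of the local Fourier transforms, the verification that they are exact and compatible with the slope and Swan-conductor formulas, and a microlocal analysis of how the vanishing cycles of $\pr^*K\otimes\LL_\psi(m)$ concentrate over the singular fibres; this is the content of Laumon's work \cite{Laumon87} and of \cite[Ch.~7]{Ka90}. Granting that theory, the remaining steps are bookkeeping: matching the slope ranges that map to $\infty'$, tracking the translation twists $\LL_\psi(sx')$, and confirming that the $I_s$-invariants drop out of each finite contribution.
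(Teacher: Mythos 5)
Your proposal is correct and follows essentially the same route as the paper, which in fact offers no independent argument for Proposition \ref{statphase}: it is quoted directly from \cite{Ka90}, Corollary 7.4.2, and that result is precisely the Laumon stationary phase decomposition you invoke, including the translation twist by $\LL_\psi(sx')$, the replacement of the germ at $s$ by $\LL|_{\eta_s}/\LL|_{\eta_s}^{I_s}$ for middle extensions, and the slope bookkeeping that sends only the slope-$>1$ part at $\infty$ to $\infty'$. Your sketch correctly identifies that the genuinely hard content lives in the construction and local-global compatibility of the local Fourier transforms in \cite{Laumon87} and \cite[Ch.~7]{Ka90}, which is exactly what the paper's citation delegates.
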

The stationary phase formula also allows for the computation of the generic rank of the Fourier transform.
\begin{cor}\label{fourierrank}
Suppose that $k$ is algebraically closed. Let $j:U\hookrightarrow \A^1$ be an open subset, $\LL$ a lisse irreducible sheaf on $U$ and $K=j_*\LL[1]$ its middle extension. With notations as before the rank of $\LL'$ is then
\[\rk(\LL')=\sum_{s\in S} \left(\Sw(\LL|_{\overline{\eta}_s})+\rk(\LL)-\rk(\LL|_{\overline{\eta}_s}^{I_s})\right)+\Sw(\LL|^{>1}_{\eta_\infty})-\rk(\LL|_{\overline{\eta}_s}^{>1}).\]
\end{cor}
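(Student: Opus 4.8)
The plan is to read off the generic rank of $\LL'$ from its restriction to the formal punctured disc at $\infty'$ and then apply stationary phase. Since $\LL'$ is lisse on $U'$, its generic rank equals the dimension of the monodromy representation attached to $\LL'|_{\eta_{\infty'}}$: the punctured disc $\eta_{\infty'}$ maps into the punctured neighbourhood of $\infty'$ contained in $U'$, so restricting $\LL'$ to $\eta_{\infty'}$ merely restricts the representation of $\piet$ to the inertia subgroup $I_{\infty'}$ and leaves the underlying vector space unchanged. Hence $\rk(\LL') = \dim\bigl(\LL'|_{\eta_{\infty'}}\bigr)$, and it suffices to compute the dimension of the right-hand side of the decomposition in Proposition \ref{statphase}.

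By additivity of dimension over the direct sum in Proposition \ref{statphase}, and since each twisting sheaf $\LL_\psi(sx')$ has rank one and therefore does not change dimensions, I would first record
\[\rk(\LL') = \sum_{s\in S} \rk\left(\F_\psi^{(0,\infty')}(\LL|_{\eta_s}/\LL|_{\eta_s}^{I_s})\right) + \rk\left(\F_\psi^{(\infty,\infty')}(\LL|_{\eta_\infty})\right).\]
It then remains to insert Laumon's formulas for the ranks of the local Fourier transforms \cite{Laumon87}. For a representation $M$ of $I_0$ with finite slopes one has $\rk\bigl(\F_\psi^{(0,\infty')}(M)\bigr) = \rk(M) + \Sw(M)$, whereas for a representation $N$ of $I_\infty$ all of whose slopes exceed $1$ one has $\rk\bigl(\F_\psi^{(\infty,\infty')}(N)\bigr) = \Sw(N) - \rk(N)$. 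Moreover $\F_\psi^{(\infty,\infty')}$ annihilates the part of slope $\le 1$, so that $\F_\psi^{(\infty,\infty')}(\LL|_{\eta_\infty}) = \F_\psi^{(\infty,\infty')}(\LL|^{>1}_{\eta_\infty})$, and only the slope $>1$ part at $\infty$ contributes.

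Finally I would substitute. Writing $M_s = \LL|_{\eta_s}/\LL|_{\eta_s}^{I_s}$, the invariants $\LL|_{\eta_s}^{I_s}$ are tame, hence of slope $0$ by Theorem \ref{slopedecomp}, so passing to the quotient does not change the Swan conductor, $\Sw(M_s) = \Sw(\LL|_{\overline{\eta}_s})$, while $\rk(M_s) = \rk(\LL) - \rk(\LL|_{\overline{\eta}_s}^{I_s})$. Thus each $(0,\infty')$-summand contributes $\Sw(\LL|_{\overline{\eta}_s}) + \rk(\LL) - \rk(\LL|_{\overline{\eta}_s}^{I_s})$, and the $(\infty,\infty')$-summand contributes $\Sw(\LL|^{>1}_{\eta_\infty}) - \rk(\LL|^{>1}_{\eta_\infty})$; summing over $s\in S$ yields exactly the asserted formula (with the rank in the last term understood as the rank of the slope $>1$ part at $\infty$).

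The computation is essentially bookkeeping once the stationary phase decomposition is available, and the two points that require care are (i) the justification that the generic rank may be read off at $\eta_{\infty'}$, i.e. that restriction to the punctured formal disc preserves the dimension of the representation, and (ii) the correct invocation of the two local rank formulas together with the observation that $\F_\psi^{(\infty,\infty')}$ only sees the slope $>1$ part at $\infty$. Neither is a genuine obstacle; the real content is already contained in Proposition \ref{statphase}, so I expect the only mild subtlety to be tracking the tameness of the removed invariants in order to identify $\Sw(M_s)$ with $\Sw(\LL|_{\overline{\eta}_s})$.
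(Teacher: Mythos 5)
Your proposal is correct and takes exactly the route the paper intends: the corollary is stated as an immediate consequence of the stationary phase formula of Proposition \ref{statphase}, and your argument — reading off the generic rank of $\LL'$ at $\eta_{\infty'}$, then inserting Laumon's rank formulas $\rk\,\F_\psi^{(0,\infty')}(M)=\rk(M)+\Sw(M)$ and $\rk\,\F_\psi^{(\infty,\infty')}(N)=\Sw(N)-\rk(N)$ for slopes $>1$, with the observation that invariants are tame so $\Sw(M_s)=\Sw(\LL|_{\overline{\eta}_s})$ — is precisely that bookkeeping. You also correctly identify that the last term of the stated formula contains a typo and should be read as $\rk(\LL|_{\eta_\infty}^{>1})$, the rank of the slope $>1$ part at $\infty$.
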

An analogue of Theorem \ref{tameranklow} holds under some hypotheses for local systems with not necessarily tame ramification if we make use of the Fourier transform. The following theorem is analogous to \cite[Thm A]{Arinkin10} and its proof is essentially the same.  
\begin{thm}\label{lowerrank}
Let $\LL$ be an irreducible rigid $\ell$-adic local system on $j:U\hookrightarrow \PP^1$ of $\rk(\LL)>1$ with slopes $\frac{k_1}{d_1},...,\frac{k_v}{d_v}$ all written in lowest terms. Assume that we have $\rk(\LL)<\textup{char}(k)=p$ and $\max\{k_1,...,k_r\}<p$. Then one of the following holds:
\begin{compactenum}[(i)]
\item There exists a tame character $\lambda:\piet(\Gm,1)\rightarrow \Qlb^*$ and an $\ell$-adic system $\chi$ of rank one on $U-\{\infty\}$ such that if we let $K=\MC_\lambda((j_*\HOM(\chi,\LL)[1])$, $V$ the open subset of $\PP^1$ where $\HH^{-1}(K)$ is lisse and $\MC_\lambda(\HOM(\chi,\LL)):=\HH^{-1}(K)|_V$ we have
\[\rk(\MC_\lambda(\HOM(\chi,\LL)))<\rk(\LL). \]
\item There is $\phi\in\Aut(\PP^1)$ and an $\ell$-adic local system $\chi$ of rank one on $U$ such that if we let $k:\phi^{-1}(U)\hookrightarrow \PP^1$ the embedding, $K=\F(k_*\phi^*(\HOM(\chi,\LL)[1]))$, $V$ the open subset of $\PP^1$ on which $\HH^{-1}(K)$ is lisse and let
\[\F(\phi^*\HOM(\chi,\LL)):=\HH^{-1}(K)|_V\] we have
\[\rk(\F(\phi^*\HOM(\chi,\LL)))<\rk(\LL).\]
\end{compactenum}

\end{thm}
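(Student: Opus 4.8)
The plan is to follow Arinkin's proof of \cite[Thm A]{Arinkin10} closely, substituting the $\ell$-adic analogues of the formal machinery used there: the slope decomposition (Theorem \ref{slopedecomp}) in place of Levelt--Turrittin, and the stationary phase principle (Proposition \ref{statphase}) together with the rank formula of Corollary \ref{fourierrank} in place of the Deligne--Laumon numerology for formal connections. The first step is to separate a tame from a wild case. If $\LL$ is everywhere tamely ramified then, after an automorphism of $\PP^1$ moving a singular point to $\infty$ so that the singular locus lies in a proper open subset of $\A^1$, Theorem \ref{tameranklow} applies and produces a tame character and a rank one system lowering the rank by middle convolution; this is precisely alternative (i). Hence from now on I may assume that $\LL$ is wildly ramified, i.e.\ carries a positive slope at some point, and I aim to establish alternative (ii).

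In the wild case I would realise a single Fourier transform, after normalisation, as a rank lowering operation. Concretely, choose $\phi\in\Aut(\PP^1)$ sending a well chosen singular point $P_0$ of $\LL$ to $\infty$, and twist by a rank one system $\chi$ on $U$ — allowed to be a wild Artin--Schreier sheaf $\LL_\psi(f)$ — so as to cancel the dominant exponential of $\LL$ at $P_0$ and thereby depress its slopes. By Proposition \ref{statphase} the restriction of the transform to the disc at $\infty'$ sees the finite singularities of $\phi^*\LL_1$ only through $\F_\psi^{(0,\infty')}$ and the point $P_0$ only through its slope $>1$ part; consequently, writing $\LL_1=\HOM(\chi,\LL)$, Corollary \ref{fourierrank} gives
\[\rk(\F(\phi^*\LL_1))=\sum_{s\neq\infty}\big(\Sw_s(\LL_1)+\rk(\LL_1)-\rk(\LL_1^{I_s})\big)+\Sw^{>1}_{\infty}(\LL_1)-\rk^{>1}_{\infty}(\LL_1),\]
the sum running over the finite singularities of $\phi^*\LL_1$. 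The normalisation is designed so that the term coming from $P_0$ is as small as possible — ideally zero, when the twist brings all slopes at $P_0$ down to at most $1$ — while $P_0$ is chosen among the singularities so as to remove the largest possible contribution from the finite part of the sum.

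The decisive and hardest step is to promote this into the strict inequality $\rk(\F(\phi^*\LL_1))<\rk(\LL)$. Here I would invoke rigidity: rewriting $\rig(\LL)=2$ through the Euler--Poincaré formula in terms of the Swan conductors and dimensions of invariants of $\END(\LL)$ constrains the total ramification so severely that, for the optimal choice of $P_0$ and $\chi$, the finite singularities of $\phi^*\LL_1$ contribute strictly less than $\rk(\LL)$ to the displayed formula. Making this precise requires a case distinction along the slopes $k_i/d_i$ and a careful bookkeeping of how the wild twist redistributes the slope one and slope $>1$ constituents at $\infty$, and this is where I expect the real work to lie, since a single rank one twist can tame only one exponential class at a time. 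It is also exactly here that the hypotheses $\rk(\LL)<p$ and $\max_i k_i<p$ enter: they guarantee that every irreducible local constituent has Swan conductor $<p$, so that by the opening Proposition its formal type is of the standard shape $[r]_*(\LL_\psi(\varphi)\otimes\K)$ on which the twist can act, and that the local Fourier transforms of Laumon alter ranks, Swan conductors and slopes by the same formulas as in characteristic zero, with no collapse caused by $p$-divisibility. Granting the estimate, one of the two reductions always applies and the theorem follows.
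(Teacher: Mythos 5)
Your overall route is the same as the paper's: the paper also proves Theorem \ref{lowerrank} by transposing Arinkin's proof of \cite[Thm A]{Arinkin10} essentially verbatim (tame case handled by Theorem \ref{tameranklow}, wild case by Fourier transform, stationary phase and Corollary \ref{fourierrank}), and simply declares the remainder ``completely analogous.'' But your proposal has a genuine gap, and it sits exactly where you say you ``expect the real work to lie.'' The one step of Arinkin's argument that does \emph{not} transpose automatically --- and the only thing the paper actually proves --- is the tensor-slope estimate
\[\dim((V\otimes W)(x))\ \ge\ \dim V(x)\,\dim W(x)\,(1-1/d)\]
for $x=n/d$ with $(p,d)=1$ and $n<p$ (Lemma \ref{sameslopereps}, the $\ell$-adic analogue of \cite[Lemma 6.1]{Arinkin10}). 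This lemma is what makes the rank-lowering estimate of Arinkin's Section 4.3 work: rigidity via Euler--Poincar\'e constrains $\Sw_x(\END(\LL))$, and to convert a bound on $\END(\LL)=\LL\otimes\LL^\vee$ into a bound on the slope distribution of $\LL$ itself --- hence into the existence of a twist $\chi$ and a point for which the Fourier transform drops the rank --- one needs a lower bound on the slope-$x$ part of a tensor product. Your ``careful bookkeeping'' cannot be carried out without it. In positive characteristic the lemma is not formal: its proof uses Katz's structure theorem \cite[8.5.7.1]{Ka88}, that an irreducible representation of single slope $n/d$ with $n<p$ is induced from a character of $I(d)$ of the form $\LL_\psi(a_nt^n+\dots+a_1t)\otimes(\textrm{tame})$, together with Mackey's formula for tensor products of induced representations and a count of how many conjugates ${}^g\chi\otimes\rho$ can drop slope. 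This, and not what you wrote, is where the hypothesis $\max_i k_i<p$ enters.

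Your substitute justification for the hypotheses is moreover incorrect: you claim that $\Sw<p$ ensures every irreducible local constituent has the standard shape $[r]_*(\LL_\psi(\varphi)\otimes\K)$, but the structure theorem you invoke (Theorem \ref{poscharlevelt}, Corollary \ref{LT-pos}) additionally requires $\rho(P^p[P,P])=1$, a condition the hypotheses of Theorem \ref{lowerrank} do not grant --- indeed no Levelt--Turittin-type decomposition exists in general in positive characteristic. The paper deliberately avoids any such decomposition in the proof of Theorem \ref{lowerrank}, working only with the character-level statement of Katz inside Lemma \ref{sameslopereps}. To repair your proposal, replace the appeal to the standard shape by a proof of Lemma \ref{sameslopereps} and then check, as the paper indicates, that the estimates of \cite[Section 4.3]{Arinkin10} go through with this input.
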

%
The additional input that one needs to prove Theorem \ref{lowerrank} in this setting is the following Lemma which is stated in \cite{Arinkin10}, Lemma 6.1, without proof and without the additional assumption $n<p$. 
\begin{lem} \label{sameslopereps}
Let $V$ and $W$ be $\Qlb$-representations of $I$. Let $x=\frac{n}{d}\in \Q_{\ge 0}$ with $(p,d)=1$ and $n<p$. We have
\[\dim((V\otimes W)(x))\ge \dim V(x)\dim W(x)(1-1/d). \]
\end{lem}
\begin{proof} First note that in any case 
\[\dim((V\otimes W)(x)) \ge \dim((V(x)\otimes W(x)))(x)).\]
Hence we can assume that $V=V(x)$ and $W=W(x)$ and we can furthermore assume that they are irreducible. Write $x=n/d$ with $(n,d)=1$. Since $p$ does not divide $d$, $V$ and $W$ are induced from the unique open normal subgroup $I(d)$ of index $d$ from characters $\chi$ and $\rho$ of slope $n$, see [\cite{Ka88}, 1.14.]. Let us write 
\[V=\Ind_{I(d)}^I \chi, W=\Ind_{I(d)}^I \rho.\]
By \cite[Thm. 10.18]{Curt90} we have 
\[V\otimes W=\bigoplus_{g\in I/I(d)} \Ind_{I(d)}^I  \,^g\chi\otimes \rho,\]
where $\,^g\chi$ denotes the conjugate representation of $\chi$ and by abuse of notation by $g$ we mean a lift in $I$. Because $p<n$ by \cite[8.5.7.1]{Ka88} we have 
\[\chi \cong \LL_\psi(a_nt^n+\dots+a_1t)\otimes \K_1 \]
and similarly 
\[\rho \cong \LL_\psi(b_nt^n+\dots+b_1t)\otimes \K_2 \]
where for $\varphi(u)\in k(\!(u)\!)$ by $\LL_\psi(\varphi(u))$ we denote the pull-back corresponding to the covering of formal disks given by $\varphi$ and where $\K_1$ and $\K_2$ are tamely ramified representations. In this setting if we identify $I/I(d)\cong \mu_d(k)$ then \[\,^g (\LL_\psi(\varphi(u))) \cong \LL_\psi(\varphi(gu)).\]
An explicit computation shows that the slope of $\,^g\chi \otimes \rho$ can only be less than $n$ if $a_ng^n+b_n=0$ and this can happen at most for one $g$. From this it follows that
\begin{align*}\dim((V\otimes W)(x))&=\sum_{g\in I/I(d)} \dim \left( \Ind_{I(d)}^I  \,^g\chi\otimes \rho\right)(x) \ge d(d-1) \end{align*}
because at most one summand can vanish. Finally we have 
\[d(d-1)  = \dim V(x)\dim W(x)(1-\frac{1}{d})\]
proving the claim.
\end{proof}
Using this Lemma one can check that the results of Section 4.3. of \cite{Arinkin10} hold in the arithmetic setting. The rest of the proof of Theorem \ref{lowerrank} works completely analogous. \par
Most importantly, as a corollary we have the following version of the Katz-Arinkin algorithm for rigid irreducible local systems having slopes with numerator 1.
\begin{cor}
Let $\LL$ be a rigid irreducible $\ell$-adic local system on $U\xhookrightarrow{j} \PP^1$ such that $\rk(\LL)<p$ and all of its slopes have numerator 1. After a finite sequence of Fourier transforms, coordinate changes by automorphisms of $\PP^1$ and twists with rank one local systems the sheaf $\LL$ is reduced to a tamely ramified $\Qlb$-sheaf of rank one. 
\end{cor}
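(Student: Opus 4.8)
The plan is to induct on $\rk(\LL)$, with Theorem \ref{lowerrank} providing the inductive step and the reduction to rank one being the goal. The overall shape mirrors Arinkin's algorithm: at each stage one applies one of the two operations furnished by Theorem \ref{lowerrank} to strictly lower the rank while staying inside the class of irreducible rigid local systems whose slopes have numerator $1$.

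First I would treat the base case $\rk(\LL)=1$. Here the local monodromy at each $s\in S$ is a character of $I_s$, hence of the form $\LL_\psi(\varphi_s)\otimes\chi_s$ with $\varphi_s\in t_s^{-1}k[t_s^{-1}]$ and $\chi_s$ tame. Choosing, by Mittag--Leffler on $\PP^1$, a rational function $f$ regular on $U$ with prescribed principal part $\varphi_s$ at each $s$, the twist by the rank one sheaf $\LL_\psi(-f)$ kills every wild part and leaves a tamely ramified rank one sheaf. Thus a single rank one twist settles the base case.

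For the inductive step, assume $\rk(\LL)>1$. Since all slopes have numerator $1$ the quantity $\max\{k_1,\dots,k_v\}$ equals $1<p$, and $\rk(\LL)<p$ by assumption, so Theorem \ref{lowerrank} applies and yields an irreducible rigid $\LL'$ with $\rk(\LL')<\rk(\LL)$, obtained from $\LL$ either by a rank one twist followed by a middle convolution $\MC_\lambda$ with tame $\lambda$ (case (i)), or by a rank one twist, an automorphism of $\PP^1$ and a Fourier transform (case (ii)). I then need to check that $\LL'$ re-enters the hypotheses of the corollary. Irreducibility and rigidity persist under middle convolution, Fourier transform, twists and automorphisms; the bound $\rk(\LL')<p$ is free because the rank drops; so everything hinges on showing that the slopes of $\LL'$ again have numerator $1$.

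This last verification is where I expect the real work to lie. For the Fourier transform I would combine stationary phase (Proposition \ref{statphase}) with the explicit action of the local Fourier transforms on a slope $s$ written in lowest terms, namely $s\mapsto s/(1+s)$, $s\mapsto s/(1-s)$ and $s\mapsto s/(s-1)$, each of which preserves the numerator (the slope-exactly-$1$ part at $\infty$ is routed to finite singularities by the $\LL_\psi(sx')$ twists of stationary phase, where it becomes tame). Middle convolution with a tame Kummer sheaf I would rewrite, up to shift and Tate twist, as $\F_\psi^{-1}\circ(-\otimes j_*\LL_{\overline\lambda})\circ\F_\psi$; tensoring with the everywhere tame $\LL_{\overline\lambda}$ does not move any slope, and $\F_\psi$ preserves numerators, so $\MC_\lambda$ does too. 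The genuinely delicate point is the rank one twist, since tensoring two wild representations of equal slope can in principle collapse to an uncontrolled smaller slope. Here I would invoke the local structure result for inertia representations established in this paper (strengthening Fu's Proposition 0.5), writing each local factor as $[r]_*(\LL_\psi(\varphi)\otimes\K)$ with $\varphi$ of numerator $1$ slope, and use the projection formula $[d]_*(M)\otimes N\cong[d]_*(M\otimes[d]^*N)$ together with additivity of the exponents to show that coincident slopes either persist or cancel down to a tame contribution, never producing a slope of numerator $>1$. Granting this, the induction terminates: the rank is a strictly decreasing positive integer, so after finitely many Fourier transforms, automorphisms and rank one twists one arrives at rank one, and the base case completes the reduction to a tamely ramified rank one sheaf.
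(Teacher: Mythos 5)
Your proposal matches the paper's (implicit) proof: the corollary is stated as a direct consequence of Theorem \ref{lowerrank}, obtained exactly by your iteration --- apply the rank-lowering theorem, note that rigidity, irreducibility, the bound $\rk<p$ and the numerator-$1$ slope condition persist under twists, middle convolution and Fourier transform (via the local slope maps $s\mapsto s/(1\pm s)$), and terminate at rank one --- following Arinkin's derivation of his Theorem B from Theorem A. The only slight imprecision is in your base case, where a rank-one character is of the form $\LL_\psi(\varphi)\otimes(\textup{tame})$ not in general (higher Witt-vector characters exist), but precisely because the numerator-$1$ hypothesis forces its slope to be $0$ or $1$, hence Swan conductor $<p$, which your standing hypotheses supply.
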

Using the principle of stationary phase from Proposition \ref{statphase}, to understand the behaviour of local monodromy under Fourier transform it is therefore enough to understand the local Fourier transform of representations of the inertia group $I$. For a certain type of representations we can explicitly compute these transforms. These are analogues of the formal connections which are called elementary in \cite{Sa08} and correspond to sheaves of the form
\[[r]_*(\LL_\psi\otimes \K) \]
for some integer $r$ prime to $p$, $\varphi\in t^{-1}k[t^{-1}]$ and $\K$ some tamely ramified sheaf. 
\begin{prop}[\cite{Fu10}, Thm 0.1]\label{locfourp}Let $\A^1=\Spec k[t]$ with $k$ algebraically closed, $\K$ an indecomposable tamely ramified $\ell$-adic local system on $\Gm$ and denote by $t'$ the Fourier transform variable. Let $\rho(t)=t^r$ and
\[\varphi(t)=\frac{a_{-s}}{t^s}+...+\frac{a_{-1}}{t}\in t^{-1}k[t^{-1}]\]
and let 
\[\widehat{\rho}(t)=-\frac{\frac{d}{dt}\varphi(t)}{\frac{d}{dt}\rho(t)},\ \widehat{\varphi}(t)=\varphi(t)+\rho(t)\widehat{\rho}(t). \]
Suppose that $2, r, s$ and $r+s$ are all prime to $p$ and denote by $\chi_2:\mu_2(k)\rightarrow \Qlb^*$ the unique quadratic character. We then have
\[\FI((\rho_*(\LL_\psi(\varphi(t))\otimes \K)|_{\eta_0})\cong \widehat{\rho}_*(\LL_\psi(\widehat{\varphi}(t))\otimes \K\otimes [s]^*\K_{\chi_2})|_{\eta_{\infty'}}.\]
\end{prop}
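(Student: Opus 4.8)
The plan is to realize $\FI$ as Laumon's local Fourier transform and to compute it by the $\ell$-adic principle of stationary phase, which exhibits the formula above as the exact analogue of the classical stationary-phase expansion. Writing the total phase as $\varphi(t)+\rho(t)t'$, its critical locus is cut out by $\varphi'(t)+\rho'(t)t'=0$, i.e. by $t'=\widehat{\rho}(t)$; the new exponential is the value of the phase along this locus, namely $\widehat{\varphi}(t)=\varphi(t)+\rho(t)\widehat{\rho}(t)$; and the quadratic twist $[s]^*\K_{\chi_2}$ is the $\ell$-adic shadow of the Gaussian/Hessian factor. Since $k$ is algebraically closed everything is geometric, so constant Gauss-sum scalars may be ignored and only the Kummer-character content of the Hessian will matter.

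First I would pin down the invariants, both as a consistency check and to fix the covering degree. The sheaf $\rho_*(\LL_\psi(\varphi)\otimes\K)|_{\eta_0}$ is totally wild of rank $r\,\rk(\K)$ and slope $s/r$, hence of Swan conductor $s\,\rk(\K)$. By Corollary \ref{fourierrank} (equivalently, by Laumon's numerology for $\F^{(0,\infty')}$) the transform has rank $r\,\rk(\K)+s\,\rk(\K)=(r+s)\,\rk(\K)$ and slope $\tfrac{s/r}{1+s/r}=\tfrac{s}{r+s}$. A direct computation of pole orders, using that $r+s$ is prime to $p$ so that the leading coefficient $a_{-s}\tfrac{r+s}{r}$ does not vanish, shows that $\widehat{\rho}$ has a pole of order exactly $r+s$ at $0$ and that $\widehat{\varphi}$ has a pole of order $s$. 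Thus the right-hand side $\widehat{\rho}_*(\LL_\psi(\widehat{\varphi})\otimes\K\otimes[s]^*\K_{\chi_2})|_{\eta_{\infty'}}$ also has rank $(r+s)\,\rk(\K)$ and slope $s/(r+s)$, matching the transform; in particular the correcting twist is forced to be tame of rank one.

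For the computation itself I would set up a global model: choose a sheaf on $\Gm$ whose restriction to $\eta_0$ is the given local object and whose other singularities are disjoint from the $(0,\infty')$-contribution, apply the global Fourier transform $\F_\psi$, and read off the $\eta_{\infty'}$-summand via the principle of stationary phase (Proposition \ref{statphase}). The substitution $t'=\widehat{\rho}(t)$ defines, because $r$, $s$ and $r+s$ are prime to $p$, a tame degree-$(r+s)$ cover of the formal disc at $\infty'$; pulling the exponential factor back along it turns $\LL_\psi(\varphi(t)+\rho(t)t')$ into $\LL_\psi(\widehat{\varphi}(t))$, while the tame sheaf $\K$ is transported unchanged. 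The remaining contribution comes from linearising the phase transversally to the critical locus: the quadratic term is governed by the Hessian $\varphi''(t)+t'\rho''(t)=(\varphi''\rho'-\varphi'\rho'')/\rho'$, whose leading term along $t'=\widehat{\rho}(t)$ is $s\,a_{-s}(r+s)\,t^{-s-2}$. Its quadratic-Gauss-sum contribution is, geometrically, the Kummer sheaf $\K_{\chi_2}$ evaluated on this Hessian; since $\chi_2(t^{-s-2})=\chi_2(t)^{s}$ and the scalar $\chi_2(s\,a_{-s}(r+s))$ is geometrically trivial, this is exactly $[s]^*\K_{\chi_2}$.

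The hardest step will be this last one: determining the tame twist precisely rather than merely up to an unknown tame character. Concretely one must show that the transverse Gaussian integral produces the quadratic Kummer sheaf attached to the Hessian and no further ramified factor, and then match the character $\chi_2^{\,s}$ it carries with the pullback $[s]^*\K_{\chi_2}$. The hypotheses that $2,r,s$ and $r+s$ are all prime to $p$ enter decisively here: they make all the coverings in sight tame and separable and guarantee that the quadratic Gauss sum is non-degenerate, so that the quadratic character is the only tame correction and $\widehat{\varphi}$ retains its full pole order $s$. Once this identification is in place, the isomorphism of $I_{\infty'}$-representations follows by assembling the exponential factor $\LL_\psi(\widehat{\varphi})$, the transported tame part $\K$, and the Hessian twist $[s]^*\K_{\chi_2}$ over the degree-$(r+s)$ cover $\widehat{\rho}$.
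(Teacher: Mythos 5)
The paper offers no proof of this proposition: it is imported verbatim from Fu's theorem \cite[Thm 0.1]{Fu10}, so the only fair comparison is with Fu's argument, and against that standard your proposal has a genuine gap at its center. Your consistency checks are all correct and worth keeping --- the rank $(r+s)\rk(\K)$ and slope $s/(r+s)$ on both sides, the pole order $r+s$ of $\widehat{\rho}$, the pole order $s$ of $\widehat{\varphi}$ via the nonvanishing of $a_{-s}\frac{r+s}{r}$, and the Hessian leading term $s(r+s)a_{-s}t^{-s-2}$ whose quadratic character content is $\chi_2^{s+2}=\chi_2^{s}$ --- but numerology only shows the two sides \emph{could} agree. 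The two steps that are supposed to produce the isomorphism both fail as stated. First, the global-model step is circular: Laumon's stationary phase (Proposition \ref{statphase}) does not \emph{compute} $\FI$ of anything; it expresses the unknown monodromy at $\infty'$ of a global Fourier transform as a direct sum in which $\FI$ of your local object is one summand, sitting next to an $\F_\psi^{(\infty,\infty')}$-contribution you would also have to control. To read off the summand you would need an independent computation of the global transform's local monodromy at $\infty'$, which you never supply and which is equivalent to the theorem.

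Second, and decisively, there is no $\ell$-adic ``transverse Gaussian integral.'' In characteristic $p$ one cannot invoke a Morse-lemma change of variables splitting the Artin--Schreier phase $\varphi(t)+\rho(t)t'$ into its critical value $\widehat{\varphi}$ plus a nondegenerate quadratic form, and the assertion that the second-order jet contributes exactly the Kummer sheaf $\K_{\chi_2}$ of the Hessian and \emph{no further tame character} is precisely the Laumon--Malgrange stationary phase conjecture --- that is, it is the statement being proved, not a tool available for proving it. You correctly flag this as ``the hardest step,'' but flagging it is not doing it: everything before that point determines the answer only up to an unknown tame rank-one twist, and no amount of rank/slope/determinant bookkeeping pins that twist down. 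An actual proof must manufacture a substitute for the Gaussian computation; Fu does this through explicit reductions (projection formula, tame twists) together with cohomological lemmas controlling when a modification of the phase by higher-order terms leaves the relevant direct images unchanged, degenerating the problem to honestly quadratic Artin--Schreier phases where the Fourier transform --- and hence the appearance of $\chi_2$ --- can be computed directly; the hypotheses $p\nmid 2,r,s,r+s$ are exactly what keep those reductions nondegenerate. (Abbes and Saito later gave an independent proof via their ramification theory, with $\varepsilon$-factor precision.) As written, your proposal is a correct heuristic derivation of the formula plus a verification that it is numerically consistent, but the central identification is assumed rather than established.
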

Even though an analogue of the Levelt-Turittin theorem does not hold in full generality we will see in the next section that in our setting it still suffices to understand representations of the above form. The construction of rigid local systems is then carried out exactly as in \cite{Ja20}. 

\section{Local Structure} \label{local}

A powerful tool for the classification in the complex setting is the Levelt-Turittin theorem. It describes the structure of $\KK$-connections in a very detailed way which allows us to explicitly compute the formal types of Fourier transforms. Under the right conditions we have the following weaker version of an analogue of the Levelt-Turrittin Theorem. 
\begin{thm}[\cite{Fu10} Prop. 0.5.]\label{poscharlevelt}
Let $\rho: I\rightarrow \GL(V)$ be an irreducible $\Qlb$-representation satisfying the following conditions. 
\begin{compactenum}[(i)]
\item Let $P$ be the wild inertia subgroup of $I$. Denote by $P^p$ $p$-th powers in $p$. Then $\rho(P^p[P,P])=1$.
\item The image $\rho(I)$ is finite. 
\item We have $s:=\Sw(\rho)<p$ where $\Sw(\rho)$ is the Swan conductor of $\rho$. 
\end{compactenum}
Then there is an integer $r$ not divisible by $p$, a tame character $\lambda$ of $I$ and a polynomial $\varphi\in u^{-1}k[u^{-1}]$ of degree $s$ such that 
\[V\cong \Ind_{I(r)}^I \left(\LL_\psi(\varphi) \otimes \lambda\right). \]
\end{thm}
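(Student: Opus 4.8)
The plan is to combine Clifford theory for the normal wild inertia subgroup $P\triangleleft I$ with Artin--Schreier theory. First I would exploit hypotheses (i) and (ii). Condition (i) gives $\rho([P,P])=1$ and $\rho(P^p)=1$, so $\rho(P)$ is an elementary abelian $p$-group, and (ii) makes it finite. Since the coefficients lie in $\Qlb$ of characteristic $\ell\neq p$, ordinary representation theory applies and every $\Qlb$-irreducible representation of the finite abelian group $\rho(P)$ is one-dimensional. Restricting the irreducible $I$-representation $V$ to $P$ and applying Clifford's theorem, $V|_P$ decomposes as $e\bigoplus_{\chi\in O}\chi$, a single $I$-orbit $O$ of characters of $P$, each occurring with the same multiplicity $e$. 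Note $P\subseteq I(r)$ for every $r$ prime to $p$, since a pro-$p$ group has trivial image in the prime-to-$p$ quotient $I/I(r)$.

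Next I would analyze the stabilizer. Fix $\chi\in O$ and set $I'=\mathrm{Stab}_I(\chi)\supseteq P$. Because $I/P\cong \Itame$ is pro-$p'$ (procyclic), the orbit $O$ is finite of size $r=[I:I']$ prime to $p$, so $I'$ is exactly the unique open normal subgroup $I(r)$ of index $r$, corresponding to the tame totally ramified extension $K_r=k(\!(u)\!)$ with $u^r=t$. By the Clifford correspondence $V\cong \Ind_{I(r)}^I V'$, where $V'$ is the $\chi$-isotypic component, irreducible over $I(r)$. The character $\chi$ has order dividing $p$, hence defines an Artin--Schreier class; I would show it extends to a rank-one character $\LL_\psi(\varphi)$ of $I(r)=G_{K_r}$ by inflation--restriction: since $I(r)/P\cong\Itame$ is pro-$p'$ one has $H^i(I(r)/P,\mathbb{F}_p)=0$ for $i\ge 1$, whence $\Hom(I(r),\mathbb{F}_p)\xrightarrow{\ \sim\ }\Hom(P,\mathbb{F}_p)^{I(r)/P}$, and the $I(r)$-invariant $\chi$ lifts. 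Here $\varphi\in u^{-1}k[u^{-1}]$ is the canonical Artin--Schreier representative with no exponent divisible by $p$; the hypothesis $s<p$ guarantees that all exponents occurring are $<p$, so they are automatically prime to $p$ and no further reduction is possible.

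Then I would twist down. The representation $V'\otimes \LL_\psi(\varphi)^{-1}$ is irreducible over $I(r)$ and trivial on $P$ (as $\chi\cdot\chi^{-1}=1$), hence factors through the abelian procyclic quotient $I(r)/P\cong\Itame$; an irreducible $\Qlb$-representation of this group with finite image is one-dimensional. This forces $e=1$ and $V'=\LL_\psi(\varphi)\otimes\lambda$ for a tame character $\lambda$, giving $V\cong \Ind_{I(r)}^I(\LL_\psi(\varphi)\otimes\lambda)$. Finally I would pin down the degree: over $K_r$ the character $\LL_\psi(\varphi)$ has a single break equal to $\deg_{u^{-1}}\varphi$ (valid because that top exponent is prime to $p$); inducing from the degree-$r$ tame extension converts the $u$-adic break to the $t$-adic break $\deg_{u^{-1}}\varphi/r$, and since $\dim V=r$ we get $\Sw(V)=\deg_{u^{-1}}\varphi$, so $\deg\varphi=s$.

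The main obstacle I anticipate is the extension step together with the degree identification: showing that the $I(r)$-invariant character $\chi$ of $P$ genuinely extends to a rank-one Artin--Schreier character of $G_{K_r}$ of the prescribed shape, and that its canonical representative has degree exactly the Swan conductor. This is precisely where the two hypotheses enter essentially: condition (i) (pure exponent $p$) keeps us in ordinary Artin--Schreier theory rather than Artin--Schreier--Witt, and $s<p$ ensures every relevant pole order is prime to $p$, so the canonical form is forced and the break/Swan computation is clean. The remaining Clifford-theoretic bookkeeping and the multiplicity vanishing $e=1$ are then formal consequences of the finiteness hypothesis (ii).
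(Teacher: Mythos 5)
Your argument is correct, but note that there is nothing in the paper to compare it against: the paper imports this statement verbatim from Fu \cite[Prop.~0.5]{Fu10} without reproducing a proof, recording only the necessity remark that $\Res_P^I\Ind_{I(r)}^I(\LL_\psi(\varphi)\otimes\lambda)$ is a sum of Artin--Schreier characters and hence trivial on $P^p[P,P]$. Your Clifford-theoretic route is the natural one and it goes through: $\rho(P)$ is finite elementary abelian by (i) and (ii), $V|_P$ is semisimple, the stabilizer $I'$ of a constituent $\chi$ is open, contains $P$, has index prime to $p$, hence equals $I(r)$ by procyclicity of $I/P$; the invariant character extends to $I(r)$; and the twisted isotypic component is a character by Schur. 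One step you assert without justification is the containment $P\subseteq \mathrm{Stab}_I(\chi)$: this is exactly where (i) does real work, since for $g\in P$ one has $^g\chi(x)=\overline{\chi}\bigl(\rho(g)^{-1}\rho(x)\rho(g)\bigr)=\chi(x)$ precisely because $\rho(P)$ is abelian; without it the inducing subgroup need not be of the tame form $I(r)$. For the extension step, instead of inflation--restriction you could equally use the paper's Lemma~\ref{inertiasplit}: writing $I(r)\cong P\rtimes H(r)$ and setting $\tilde{\chi}(xh)=\chi(x)$ gives a homomorphism exactly because $\chi$ is $I(r)$-invariant; this matches the toolkit the paper deploys elsewhere (determinant computation, formal monodromy).

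Your closing paragraph slightly misplaces where the hypotheses ``enter essentially.'' In your own argument (iii) is nearly superfluous: the reduction of the Artin--Schreier class modulo $\wp(K_r)$ (replacing $au^{-pm}$ by $a^{1/p}u^{-m}$ and absorbing terms of non-negative valuation, $k$ being algebraically closed) unconditionally produces a polynomial representative all of whose exponents are prime to $p$, and then $\deg\varphi=\Sw(V)$ follows from preservation of the Swan conductor under tame induction, with no appeal to $s<p$. There is also a mild circularity in ``$s<p$ guarantees all exponents are $<p$'': you only know the exponents are bounded by $s$ after you have identified $\deg\varphi=s$, which is the reduction argument, not the hypothesis. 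Similarly, (ii) is used only for semisimplicity of $V|_P$ and finiteness of the orbit; in fact $\rho(P)$ is automatically finite for any continuous $\rho$ (a pro-$p$ group has no nontrivial continuous maps into the pro-$\ell$ kernel of reduction), and one-dimensionality of $V'\otimes\LL_\psi(\varphi)^{-1}$ needs only Schur's lemma for abelian image, not finiteness. None of this is a gap --- deriving the conclusion while using a hypothesis only cosmetically is harmless --- but the diagnosis of which conditions carry the proof should be corrected.
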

Note that 
\[\textrm{Res}_{P}^I \textrm{Ind}_{I(r)}^I \left(\LL_\psi(\varphi(t))\otimes\lambda\right) \cong \bigoplus_{\zeta\in \mu_r(k)} \LL_\psi(\varphi(\zeta t)),\]
and this is a direct sum of characters factoring through $\mu_p(\Qlb)$. Hence it is trivial on $P^p[P,P]$. Therefore the first condition is a necessary condition for a representation to be of the desired shape. \par
Let $\zeta$ be a topological generator of $I^\textit{tame}$ and denote by $J$ the pre-image of $\zeta^\Z$ in $I$ under the canonical map $I\rightarrow I^\textit{tame}$. Then $J$ is a dense subgroup of $I$ and we have $J/P\cong \Z$ whose generator we also denote by $\zeta$. 
\begin{lem}[\cite{Fu10}, Lemma 2.2.]
Let $\rho: J\rightarrow \GL(V)$ be an irreducible representation over $\Qlb$. Then there is a character $\chi:J\rightarrow \Qlb^*$ trivial on $P$ such that $\rho\otimes \chi$ has finite image. 
\end{lem}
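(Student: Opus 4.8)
The plan is to use that the group $J$ is, up to the finite data carried by $P$, just an infinite cyclic group, and then to kill the infinite cyclic part by a scalar twist via Schur's lemma. Fix a lift $\sigma\in J$ of the topological generator $\zeta$ of $J/P\cong\Z$; then $J$ is generated by $P$ and $\sigma$. Since a character $\chi\colon J\to\Qlb^*$ trivial on $P$ factors through $J/P\cong\Z$, it is completely determined by the single scalar $c:=\chi(\sigma)\in\Qlb^*$, and $(\rho\otimes\chi)(J)$ is generated by $N:=\rho(P)$ together with $B:=c\,\rho(\sigma)$. The whole problem reduces to choosing $c$ so that this group is finite.

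First I would record two structural facts. The image $N=\rho(P)$ is finite: $P$ is a pro-$p$ group, $\ell\neq p$, and a continuous image of a compact group in $\GL(V)$ lands in a maximal compact subgroup conjugate to $\GL_n(\mathcal O_E)$ for a finite extension $E/\Q_\ell$; reduction modulo the maximal ideal has pro-$\ell$ kernel, so it is injective on the pro-$p$ group $\rho(P)$, which therefore embeds into a finite group $\GL_n(\mathbb F)$. Second, some power of $\rho(\sigma)$ is a scalar. Indeed, because $P$ is normal in $J$, conjugation by $\rho(\sigma)$ is an automorphism of the finite group $N$; letting $m$ be its order in $\Aut(N)$, the element $\rho(\sigma)^m$ centralizes $N$ and of course commutes with $\rho(\sigma)$, hence commutes with all of $\rho(J)=\langle N,\rho(\sigma)\rangle$. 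Since $\rho$ is irreducible and $\Qlb$ is algebraically closed, Schur's lemma gives $\End_J(V)=\Qlb\cdot\id$, so
\[\rho(\sigma)^m=\lambda\,\id\quad\text{for some }\lambda\in\Qlb^*.\]

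Then I would choose $c\in\Qlb^*$ with $c^m=\lambda^{-1}$, which exists since $\Qlb$ is algebraically closed, and set $\chi(\sigma)=c$ (extended trivially on $P$). With this choice $B^m=c^m\rho(\sigma)^m=c^m\lambda\,\id=\id$, so $B$ has order dividing $m$. Since $(\rho\otimes\chi)|_P=\rho|_P$ still has finite image $N$, which is normal in $(\rho\otimes\chi)(J)$, and the quotient by $N$ is cyclic generated by the image of $B$ and hence of order dividing $m$, the image $(\rho\otimes\chi)(J)=\langle N,B\rangle$ has order at most $m\,|N|$ and is finite. This proves the statement.

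The main obstacle is not the final combinatorial bookkeeping but the two inputs in the middle: the finiteness of $\rho(P)$, which is exactly where the hypothesis $\ell\neq p$ enters, and the reduction of $\rho(\sigma)^m$ to a scalar, which crucially uses both the irreducibility of $\rho$ and the algebraic closedness of $\Qlb$ through Schur's lemma. Note also that the twisting character $\chi$ produced this way need not have finite order, since $c$ is an $m$-th root of the possibly non-root-of-unity $\lambda^{-1}$; thus $\chi$ is only an abstract homomorphism $J/P\cong\Z\to\Qlb^*$, and it is precisely the product $\rho\otimes\chi$ that becomes finite.
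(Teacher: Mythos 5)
Your proof is correct and follows essentially the same route as the source: the paper does not reprove this lemma but quotes it from \cite{Fu10}, Lemma 2.2, whose argument is exactly yours --- finiteness of $\rho(P)$ via a stable lattice and the pro-$\ell$ congruence kernel (using $\ell\neq p$), a power $\rho(\sigma)^m$ forced to be scalar by Schur's lemma, and a twist by an $m$-th root of $\lambda^{-1}$. The only point worth making explicit is that the finiteness of $\rho(P)$ needs continuity of $\rho|_P$, which is implicit in the ambient setting (the lemma is applied to restrictions of continuous $I$-representations) but not visible in the bare statement as quoted.
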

Regarding the second condition in Theorem \ref{poscharlevelt}, the following stronger statement holds. 
\begin{cor}\label{finiteimage}
Let $\rho :I\rightarrow \GL(V)$ be an irreducible $\Qlb$-representation of dimension $n$. Then there is a character $\chi:I\rightarrow \Qlb^*$ trivial on $P$ such that $\rho\otimes \chi$ has finite image. 
\end{cor}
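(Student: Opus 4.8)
The goal is to upgrade the preceding lemma, which produces a twisting character only on the dense subgroup $J$, to a character defined on all of $I$. The plan is to take the character $\chi: J\rightarrow \Qlb^*$ supplied by the lemma (trivial on $P$, with $\rho\otimes\chi$ having finite image on $J$) and show it extends continuously to $I$, after which finiteness of the image on the dense subgroup $J$ forces finiteness on the closure $I$.

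First I would record that since $\chi$ is trivial on $P$ it factors through $J/P\cong\Z$, generated by $\zeta$, so $\chi$ is determined by the single scalar $\chi(\zeta)\in\Qlb^*$. The obstruction to extending over $I$ is that $I^{\mathrm{tame}}\cong\varprojlim_{(n,p)=1}\mu_n(k)$ is the profinite completion of $\Z$, so an arbitrary homomorphism $\Z\rightarrow\Qlb^*$ need not be continuous for the profinite topology — it extends iff $\chi(\zeta)$ is a root of unity of order prime to $p$. So the real content is to arrange that $\chi(\zeta)$ is such a root of unity. Here I would use that $\rho\otimes\chi$ has \emph{finite} image on $J$: the element $(\rho\otimes\chi)(\zeta)$ has finite order, and comparing determinants, $\det(\rho\otimes\chi)(\zeta)=\chi(\zeta)^n\det\rho(\zeta)$ must be a root of unity. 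Since $\det\rho$ is itself a continuous character of $I$ trivial on $P$ (as $P$ is pro-$p$ and the image lands in $\Qlb^*$, wild part forces $\det\rho(\zeta)$ to be a prime-to-$p$ root of unity via tameness of one-dimensional representations), $\det\rho(\zeta)$ is a prime-to-$p$ root of unity; hence $\chi(\zeta)^n$ is a root of unity, so $\chi(\zeta)$ is a root of unity.

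The remaining point is to force the order of $\chi(\zeta)$ to be prime to $p$. I would argue that if $\chi(\zeta)$ had order divisible by $p$, one could modify $\chi$ by a character of $J/P$ of $p$-power order to remove the $p$-part without disturbing finiteness of $\rho\otimes\chi$; more cleanly, one replaces $\chi$ by its prime-to-$p$ part $\chi'$ defined by $\chi'(\zeta)$ equal to the prime-to-$p$ component of the root of unity $\chi(\zeta)$, and checks that $\rho\otimes\chi'$ still has finite image since it differs from $\rho\otimes\chi$ by the finite-order central twist $\chi/\chi'$. Then $\chi'$ extends continuously to a tame character $\chi':I\rightarrow\Qlb^*$ trivial on $P$.

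Finally I would conclude by density: $\rho\otimes\chi'$ is a continuous representation of $I$ whose restriction to the dense subgroup $J$ has finite image, and a continuous homomorphism to $\GL(V)$ with $\GL(V)$ carrying its natural ($\ell$-adic/analytic) topology sends the closure of $J$ into the closure of a finite set, which is itself finite; hence $(\rho\otimes\chi')(I)=(\rho\otimes\chi')(\overline{J})$ is finite, as desired. The main obstacle I anticipate is the continuity/extension step: controlling the order of $\chi(\zeta)$ and verifying that the prime-to-$p$ modification is harmless, since this is exactly where the profinite topology on $I^{\mathrm{tame}}$ obstructs a naive extension from $J$; the density argument at the end is then formal.
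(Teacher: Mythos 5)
You have a genuine gap, and it sits exactly where you predicted the difficulty would be. Your extendability criterion is wrong: a character of $J/P\cong\Z$ extends to a continuous character of $I$ trivial on $P$ if and only if it is continuous for the subspace topology of $\Z$ inside $\Itame\cong\prod_{q\neq p}\Z_q$, and since the coefficients are $\ell$-adic with $\ell\neq p$, this does \emph{not} force $\chi(\zeta)$ to be a root of unity. Writing $\OO_{E'}^*$ as (roots of unity) times the principal units $1+\m$, the latter is pro-$\ell$, so the $\Z_\ell$-factor of $\Itame$ carries continuous characters of infinite order (e.g.\ $\zeta\mapsto 1+\ell$); the correct condition is only that $\chi(\zeta)$ be an $\ell$-adic \emph{unit} in some finite extension whose torsion (Teichm\"uller) part has order prime to $p$. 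Your attempt to establish the stronger root-of-unity condition then rests on two false claims. First, one-dimensional continuous representations of $I$ are not tame in characteristic $p$ --- the Artin--Schreier characters $\LL_\psi(\varphi)$ are wildly ramified of rank one, and this paper is built on them --- so $\det\rho$ need not be trivial on $P$. Second, even genuinely tame characters can have infinite order, so $\det\rho(\zeta)$ need not be a root of unity at all. Concretely, take $\rho$ itself to be a tame character of infinite order (so $n=1$), or $\rho=\Ind_{I(r)}^I(\LL_\psi(\varphi)\otimes\lambda)$ with $\lambda$ of infinite order: then the identity $\chi(\zeta)^n\det\rho(\zeta)=(\text{root of unity})$ forces $\chi(\zeta)$ to have \emph{infinite} order, so no character with $\chi(\zeta)$ a root of unity can make $\rho\otimes\chi$ finite, even though the corollary is true for these $\rho$. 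Your strategy is therefore not merely incomplete but unrepairable as stated.

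The paper's proof takes the route your criterion closed off. It first verifies that $\rho|_J$ is irreducible (a step you implicitly assume when invoking the lemma; the paper argues via closed parabolic subgroups and density of $J$). From finiteness of the image on $J$ it gets $g^r\in\ker(\tilde\rho\otimes\tilde\chi)$ and hence $\tilde\chi(g)^{rn}=\det(\tilde\rho(g^r))^{-1}$, which by compactness of $\rho(I)$ lies in $\OO_E^*$ for a finite extension $E$ of $\Q_\ell$. This shows only that $\tilde\chi(g)$ is an \emph{integral unit} in a further finite extension $E'$ --- not a root of unity --- so that $\tilde\chi$ takes values in the compact, hence complete, group $\OO_{E'}^*$, and then the cited extension theorem ([Hu82, p.~96]) extends $\tilde\chi$ from the dense subgroup $J$ to all of $I$. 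Your final step (finiteness of $(\rho\otimes\chi)(I)$ from finiteness on the dense subgroup $J$, since the closure of a finite set is finite) coincides with the paper's conclusion and is fine; the gap is entirely in the middle, where ``unit in $\OO_{E'}^*$'' cannot be upgraded to ``root of unity.''
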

\begin{proof}
Let $\tilde{\rho}=\rho|_J$ be the restriction of $\rho$ to $J$. This is again irreducible which can be seen as follows. Suppose it is not, then $\tilde{\rho}(J)$ stabilizes a subspace $W\subset V$ hence is contained in a proper parabolic subgroup $P$ of $GL(V)$. Since $\rho$ is continuous and $P$ is closed we have 
\[\rho(I)=\rho(\overline{J})\subset \overline{\tilde{\rho}(J)} \subset \overline{P}=P. \]
Therefore $\rho$ couldn't have been irreducible. We conclude that $\tilde{\rho}$ must be irreducible. By the above lemma there exists a character $\tilde{\chi}:J\rightarrow \Qlb^*$ such that $\tilde{\rho}  \otimes \tilde{\chi}$ has finite image in $\GL(V)$. Let $g\in J$ be an inverse image of $\zeta\in J/P$ and let $x=\tilde{\rho}\otimes\tilde{\chi}(g)$. The cyclic group generated by $x$ inside the image of $\tilde{\rho}\otimes\tilde{\chi}$ must be finite, so there is a positive integer $r$ such that $g^r$ lies in the kernel of $\tilde{\rho}\otimes\tilde{\chi}$. We find that
\[1=\det(\tilde{\rho}\otimes\tilde{\chi}(g^r))=\tilde{\chi}(g)^{rn} \det(\tilde{\rho}(g^r)). \]
Since $\rho(I)$ is compact, we can assume that it is a subgroup of $\GL_n(\OO_E)$ for a finite extension $E$ of $\Ql$. Now $\tilde{\rho}(g^r)=\rho(g^r)\in \GL_n(\OO_E)$ and $\tilde{\chi}(g)^{rn}\in \OO_E^*$. After a further finite extension $E\subset E'$ we get that $\tilde{\chi}$ factors through $\OO_{E'}^*$. The latter is compact, hence complete and we can extend 
$\tilde{\chi}:J\rightarrow \OO_{E'}^* $
by \cite[Page 96]{Hu82} to a character 
\[\chi:I\rightarrow \OO_{E'}^*\hookrightarrow \Qlb. \]
Finally we have
\[\rho\otimes\chi(I)=\rho\otimes\chi(\overline{J})\subset \overline{\tilde{\rho}\otimes\tilde{\chi}(J)}=\tilde{\rho}\otimes\tilde{\chi}(J) \]
proving the claim. 
\end{proof}
This means that the following stronger version of Theorem \ref{poscharlevelt} is true. 
\begin{cor}\label{LT-pos}
Let $\rho : I\rightarrow \GL(V)$ be an indecomposable $\Qlb$-representation. Suppose $\rho(P^p[P,P])=1$ and $\Sw(\rho)<p$. Then the lisse $\Qlb$-sheaf on $\eta=\Spec \,k(\!(t)\!)$ corresponding to $\rho$ is isomorphic to
\[[r]_*(\LL_\psi(\varphi)\otimes \K) \]
where $r$ is an integer prime to $p$, $[r](u)=u^r$, $\K$ is a tamely ramified $\Qlb$-sheaf on $\eta$, $\LL_\psi$ is the Artin-Schreier sheaf and $\varphi$ is a polynomial in $u^{-1}$ where $u^r=t$. 
\end{cor}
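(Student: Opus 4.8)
The plan is to reduce the statement to the irreducible case, which is already within reach of the two preceding results, and then to bridge from irreducible to indecomposable by Clifford theory along the normal subgroup $P$. For the irreducible case, suppose $\rho$ is irreducible. By Corollary \ref{finiteimage} there is a tame character $\chi$ (trivial on $P$) with $\rho\otimes\chi$ of finite image. Since $\chi$ is tame, $\rho\otimes\chi$ still satisfies $(\rho\otimes\chi)(P^p[P,P])=1$ and $\Sw(\rho\otimes\chi)=\Sw(\rho)<p$, so Theorem \ref{poscharlevelt} applies and gives $\rho\otimes\chi\cong\Ind_{I(r)}^I(\LL_\psi(\varphi)\otimes\lambda)$ for a tame character $\lambda$. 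Untwisting with $\chi^{-1}$ and using the projection formula, $\rho\cong\Ind_{I(r)}^I(\LL_\psi(\varphi)\otimes\lambda\otimes\Res\,\chi^{-1})$, and $\lambda\otimes\Res\,\chi^{-1}$ is again tame; this is the desired form with $\K$ of rank one.

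For the indecomposable case the first step is to analyze $\rho|_P$. As $\rho(P^p[P,P])=1$, this restriction factors through the elementary abelian exponent-$p$ quotient $A=P/P^p[P,P]$; its image is a finite group of commuting semisimple matrices, so $\rho|_P$ is a direct sum of characters $\chi_i\colon P\to\mu_p$. Since $P$ is normal in $I$, the group $I$ permutes the isotypic components $V_{\chi_i}$ by conjugation, and grouping these components into $I$-orbits exhibits $V$ as a direct sum of $I$-subrepresentations, one per orbit. Indecomposability of $\rho$ therefore forces a single orbit, and standard Clifford theory for the normal subgroup $P$ yields $\rho\cong\Ind_{I_\chi}^I(V_\chi)$, where $\chi=\chi_1$, $I_\chi\supseteq P$ is its stabilizer, and $V_\chi$ is the (indecomposable) $\chi$-isotypic component regarded as an $I_\chi$-representation. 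Because $I/P\cong\Itame$ has only prime-to-$p$ finite quotients, $I_\chi$ is one of the groups $I(r)$ with $r=[I:I_\chi]$ prime to $p$, corresponding to the cover $u^r=t$.

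It remains to untwist $V_\chi$ into a tame sheaf, and this is where the main work lies. The crux is to realize the $I(r)$-invariant character $\chi$ of $P$ as the restriction of an Artin--Schreier character $\LL_\psi(\varphi)$ of $I(r)$ with $\varphi\in u^{-1}k[u^{-1}]$. By Artin--Schreier theory over $k((u))$ the continuous homomorphisms $I(r)\to\IF_p$ are identified with $u^{-1}k[u^{-1}]$ (using that $\wp(x)=x^p-x$ is surjective on $k$ and on $k[[u]]$, as $k$ is algebraically closed), i.e.\ with the characters $\LL_\psi(\varphi)$. The inflation--restriction sequence, together with the vanishing $H^1(\Itame,\mu_p)=H^2(\Itame,\mu_p)=0$ coming from $\Itame$ being pro-prime-to-$p$ while $\mu_p$ is $p$-torsion, shows that restriction gives an isomorphism from $\Hom(I(r),\mu_p)$ onto the $I(r)$-invariant characters $\Hom(P,\mu_p)^{\Itame}$; hence our $\chi$ does extend to a unique such $\LL_\psi(\varphi)$. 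Setting $\K:=V_\chi\otimes\LL_\psi(\varphi)^{-1}$, the restriction $\K|_P$ is trivial, so $\K$ is tamely ramified, and it is indecomposable because tensoring by a character is an autoequivalence. Thus $V_\chi\cong\LL_\psi(\varphi)\otimes\K$ and $\rho\cong\Ind_{I(r)}^I(\LL_\psi(\varphi)\otimes\K)=[r]_*(\LL_\psi(\varphi)\otimes\K)$, as claimed. The main obstacle is precisely this extension step: without the cohomological vanishing one could not guarantee that the $P$-character singled out by Clifford theory is of exponential (Artin--Schreier) type, and hence could not separate the wild part $\LL_\psi(\varphi)$ from a genuinely tame remainder $\K$.
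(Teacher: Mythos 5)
Your proof is correct, but it goes well beyond the route the paper takes. The paper gives no written argument for this corollary at all: it deduces it by combining Corollary \ref{finiteimage} (twist to finite image by a tame character) with Fu's Theorem \ref{poscharlevelt}, which is exactly your first paragraph, and it leaves the passage from Fu's \emph{irreducible} statement to the \emph{indecomposable} claim implicit. Your second and third paragraphs supply an independent, self-contained proof of the indecomposable case: since $\rho(P^p[P,P])=1$ and $\rho(P)$ is a compact abelian exponent-$p$ subgroup of $\GL(V)$, hence finite and diagonalizable, $V|_P$ splits into $\chi$-isotypic pieces permuted by $I$; indecomposability forces one orbit, Clifford theory gives $V\cong \Ind_{I_\chi}^I V_\chi$ with $I_\chi=I(r)$, $p\nmid r$ (the only open subgroups of $I$ containing $P$, as $\Itame$ is procyclic pro-prime-to-$p$); and the inflation--restriction sequence with $H^1(\Itame,\mu_p)=H^2(\Itame,\mu_p)=0$ plus Artin--Schreier theory over $k(\!(u)\!)$ extends $\chi$ to $\LL_\psi(\varphi)$ with $\varphi\in u^{-1}k[u^{-1}]$, so that $\K:=V_\chi\otimes\LL_\psi(\varphi)^{-1}$ is trivial on $P$, i.e.\ tame. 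All steps check out (including the openness of $I_\chi$ and the semisimplicity of $V|_P$ in characteristic-$0$ coefficients). The comparison: the paper's citation-based route is shorter given Fu's theorem, but your Clifford-theoretic argument actually proves the indecomposable case that the paper glosses over, renders your own first paragraph logically redundant, and avoids the finite-image twisting machinery entirely. Notably, your direct argument never uses the hypothesis $\Sw(\rho)<p$ --- it enters only through the (superfluous) appeal to Theorem \ref{poscharlevelt} --- so your proof in fact establishes a slightly stronger statement than the paper's; it would be worth saying this explicitly rather than leaving the unused hypothesis unremarked.
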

\begin{cor} Let $\rho : I\rightarrow \GL(V)$ be an indecomposable $\Qlb$-representation. Suppose $\rho(P^p[P,P])=1$ and $\Sw(\rho)<p$. Then the same is true for $\FI(V)$. 
\end{cor}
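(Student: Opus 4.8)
The plan is to reduce $V$ to explicit elementary shape and then transport it through the explicit local Fourier transform of Proposition~\ref{locfourp}. First I would dispose of the tame case: if $\Sw(\rho)=0$ then $\rho$ is trivial on $P$, hence on $P^p[P,P]$, and $\FI$ of a tame representation is again tame with vanishing Swan conductor, so both assertions are immediate. Assume therefore that $\rho$ is wildly ramified. Since $V$ is indecomposable with $\rho(P^p[P,P])=1$ and $\Sw(\rho)<p$, Corollary~\ref{LT-pos} lets me write $V\cong [r]_*(\LL_\psi(\varphi)\otimes \K)$ with $r$ prime to $p$, $\K$ a tamely ramified sheaf of some rank $m\ge 1$, and $\varphi$ of pole order $s\ge 1$. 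As the Swan conductor is preserved under induction along the tame covering $[r]$, we get $\Sw(V)=sm<p$; in particular $1\le s<p$, so both $s$ and $r$ are prime to $p$.

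Granting the hypotheses of Proposition~\ref{locfourp} (discussed below), I would then compute
\[\FI(V)\cong \widehat{\rho}_*\bigl(\LL_\psi(\widehat\varphi)\otimes \K\otimes [s]^*\K_{\chi_2}\bigr),\]
which is again of elementary form $[\widehat r]_*(\LL_\psi(\widehat\varphi)\otimes\widehat\K)$ with $\widehat\K=\K\otimes[s]^*\K_{\chi_2}$ tamely ramified. Both conclusions then follow from this shape. The local Fourier transform preserves the Swan conductor (this is part of Laumon's theory; compare the global rank formula of Corollary~\ref{fourierrank}), so $\Sw(\FI(V))=\Sw(V)=sm<p$. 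For the first condition I would restrict to $P$ and use that the tame sheaf $\widehat\K$ is trivial on $P$, so that the computation recorded after Theorem~\ref{poscharlevelt} gives
\[\Res_P\FI(V)\cong\bigoplus_{\zeta\in\mu_{\widehat r}(k)}\LL_\psi(\widehat\varphi(\zeta\,\cdot))^{\oplus m},\]
a direct sum of Artin--Schreier characters. Each such character factors through $\mu_p(\Qlb)$ and is therefore trivial on $P^p[P,P]$, whence $\FI(V)(P^p[P,P])=1$.

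The step I expect to be the main obstacle is the verification of the hypotheses ``$2,r,s$ and $r+s$ prime to $p$'' of Proposition~\ref{locfourp}. That $2,r,s$ are prime to $p$ is clear from the reduction above (with $p$ odd). The delicate quantity is $r+s$: the unconditional rank and Swan formulas for $\FI$ give $\rk(\FI(V))=\rk(V)+\Sw(V)=m(r+s)$ and $\Sw(\FI(V))=sm$, so the average slope of $\FI(V)$ equals $s/(r+s)$, and one would like to deduce that $r+s$ is prime to $p$ from the fact that slopes of $I$-representations have denominators prime to $p$. This reasoning is clean exactly when $\FI(V)$ is pure of slope $s/(r+s)$, which is precisely the behaviour that can break down when $p\mid r+s$; this is why Proposition~\ref{locfourp} excludes that case. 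In the regime where the corollary is applied---slopes with numerator $1$, so that $s=1$, and $p$ large as in Theorem~\ref{classloc}---the covering degree $r$ is small and $r+s<p$ is automatic, so the obstacle disappears. Treating the residual case $p\mid r+s$ in full generality, where $\FI(V)$ need not be pure of slope $s/(r+s)$ and Proposition~\ref{locfourp} does not directly apply, is the point this clean argument leaves open.
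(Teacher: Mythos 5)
Your argument is essentially the paper's own proof: put $V$ in elementary form $[r]_*(\LL_\psi(\varphi)\otimes\K)$ via Corollary~\ref{LT-pos}, push it through the explicit local Fourier transform of Proposition~\ref{locfourp}, and read off that $\FI(V)$ is again elementary with the same Swan conductor, hence trivial on $P^p[P,P]$ because its restriction to $P$ is a sum of Artin--Schreier characters. The hypothesis check you flag ($2$, $r$, $s$, $r+s$ prime to $p$, with $p\mid r+s$ the genuinely delicate case) is a real condition of Proposition~\ref{locfourp} that the paper's two-line proof silently elides, so your proposal is at least as complete as the paper's, and your observation that the condition is automatic in the intended regime ($s=1$, $r\le 6$, $p>7$) correctly identifies why the gap is harmless where the corollary is actually applied.
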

\begin{proof} By the corollary, $V\cong [r]_*(\LL_\psi(\varphi)\otimes \K)$ with $\deg(\varphi)=\Sw(\rho)$. Now by Theorem \ref{locfourp} the local Fourier transform $\FI(V)$ is of a similar shape with the same Swan conductor and hence satisfies the desired conditions. 
\end{proof}
In particular, we obtain a Levelt-Turrittin-type decomposition for the local monodromy of rigid local systems with slopes having numerator 1. Note that the tame sheaf $\K$ can be given in terms of a Jordan form. Denote by $\U(n)$ the representation of $\Itame$ given by mapping the topological generator to a Jordan block of length $n$. Then any indecomposable representation of $\Itame$ of rank $n$ can be written as $\chi\otimes \U(n)$ for $\chi$ some character.

\section{Classification} \label{classif}
To carry out the same classification as in \cite{Ja20} we need the following tools:
\begin{compactenum}
\item A way to compute the determinant of representations of the form \[[r]_*(\LL_\psi(\varphi)\otimes \K)\]
\item and tensor products of such objects,
\item an analogue of formal monodromy (see \cite[Section 1]{Mitschi96}), giving us constraints on the tame sheaves $\K$
\item and an analogue of the exponential torus, providing constraints on the $\varphi$. 
\end{compactenum}
We will discuss these in the given order. \par
\begin{prop}
The determinant of the representation $\rho$ associated to \[[r]_*(\LL_\psi(\varphi(u))\otimes \K)\]
with $(r,p)=1$ is given by
\[\det(\rho)=(\chi_2)^{(r-1)n}\cdot \chi_{n\textup{Tr}\, \varphi(t)}\otimes \det(\K) \]
where $n$ is the rank of $\K$, $\chi_{n\textup{Tr}\, \varphi(t)}$ is the character associated to $\LL_\psi(n\textup{Tr}\,\varphi(t))$ and $\textup{Tr}\,\varphi(t)$ is the trace of $\varphi(u)$ with respect to the Galois extension $k(\!(t)\!)\subset k(\!(u)\!)$. 
\end{prop}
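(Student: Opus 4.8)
The plan is to present $\rho$ as the induced representation $\Ind_{I(r)}^I W$ with $W=\LL_\psi(\varphi(u))\otimes\K$ of dimension $n$, where $I(r)\subset I$ is the inertia subgroup of $k(\!(u)\!)$, of index $r$, and then to apply the general formula for the determinant of an induction,
\[\det\left(\Ind_{I(r)}^I W\right)=\det\left(\Ind_{I(r)}^I \one\right)^{\dim W}\otimes\left(\det W\circ\textup{Ver}\right),\]
in which $\textup{Ver}\colon I^{\textup{ab}}\to I(r)^{\textup{ab}}$ is the transfer. As a character $\det\rho$ factors through $I^{\textup{ab}}$, which is topologically generated by the image of the wild inertia $P$ together with a lift $g$ of a topological generator $\gamma$ of $\Itame$; it therefore suffices to identify the three factors on these elements.

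For the first factor I would compute the discriminant character $\det(\Ind_{I(r)}^I\one)$, the determinant of the permutation representation of $I$ on $I/I(r)\cong\Z/r$: the element $g$ acts as an $r$-cycle of sign $(-1)^{r-1}$, so this character is $\chi_2^{r-1}$, and raising to the power $\dim W=n$ contributes $\chi_2^{(r-1)n}$. For the second factor I would split $\det W=\LL_\psi(n\varphi)\otimes\det\K$, using $\det(\LL_\psi(\varphi)\otimes\K)=\LL_\psi(\varphi)^{\otimes n}\otimes\det\K$, and evaluate $\det W\circ\textup{Ver}$ through the explicit model $\bigoplus_{i=0}^{r-1}g^i\otimes W$. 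An element $h\in P$ acts block-diagonally by the $\mu_r(k)$-conjugates of $W$, so that (the tame factor $\det\K$ being trivial on $P$) its determinant on $P$ is $\prod_{\zeta\in\mu_r(k)}\LL_\psi(n\varphi(\zeta u))=\LL_\psi\!\left(n\sum_{\zeta\in\mu_r(k)}\varphi(\zeta u)\right)$; since the Galois conjugates of $\varphi$ under $\Gal(k(\!(u)\!)/k(\!(t)\!))\cong\mu_r(k)$ sum to the trace, this equals $\LL_\psi(n\,\textup{Tr}\,\varphi)=\chi_{n\textup{Tr}\,\varphi(t)}$ on $P$. In the same model $g$ acts by a block $r$-cycle of determinant $(-1)^{n(r-1)}\det W(g^r)$, and $\det\K(g^r)=\det\K(\gamma_u)$, where $\gamma_u=g^r$ is the tame generator of $I(r)$, supplies the remaining tame contribution $\det\K$.

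The step demanding the most care is this tame identification together with the transfer/trace bookkeeping. I must check that the transfer sends $\gamma$ to $\gamma^r=\gamma_u$, so that $\det\K\circ\textup{Ver}=\det\K$ once characters of $I(r)^{\textup{tame}}$ and of $\Itame$ are matched through their common residue field $k$ (both tame inertia groups being $\varprojlim_N\mu_N(k)$); and that the wild contribution is genuinely the sum over Galois conjugates, equivalently that the transfer corresponds to corestriction in Artin-Schreier cohomology and hence to $\textup{Tr}_{k(\!(u)\!)/k(\!(t)\!)}$. Once the three factors are evaluated on $P$ and on $g$, comparing with $\chi_2^{(r-1)n}\cdot\chi_{n\textup{Tr}\,\varphi(t)}\otimes\det\K$ on the same generators finishes the proof, since a character of $I$ is determined by these values.
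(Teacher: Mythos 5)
Your proposal is correct, and its skeleton is the same as the paper's: both present $\rho$ as $\Ind_{I(r)}^I W$, invoke the determinant-of-induction formula $\det \Ind_{I(r)}^I W=\varepsilon_{I\rightarrow I(r)}^{\dim W}\cdot (\det W\circ \textup{Ver})$ (the paper cites Curtis--Reiner, Prop.\ 13.15, after first stripping off $\K$ by the projection formula, while you keep $\K$ and use the formula for general $W$), compute the sign character as $\chi_2^{r-1}$ from the $r$-cycle action on $I/I(r)$, and extract the wild part from the product of $\mu_r(k)$-conjugates $\prod_{\zeta}\LL_\psi(\varphi(\zeta u))\cong \LL_\psi(\textup{Tr}\,\varphi)$. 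Where you genuinely diverge is the delicate step that both proofs must not fudge: since $\chi_{n\textup{Tr}\,\varphi(t)}$ is wildly ramified, its value on a lift $g$ of the tame generator is not canonical, so the identity $\det W\circ\textup{Ver}=\LL_\psi(n\textup{Tr}\,\varphi)\otimes\det\K$ has to be verified on the tame direction and not only on $P$. You do this by evaluating in the explicit model $\bigoplus_i g^i\otimes W$, computing $\textup{Ver}(g)=g^r=\gamma_u$, and appealing to the compatibility of transfer with corestriction in Artin--Schreier cohomology, i.e.\ $\LL_\psi(\varphi)\circ\textup{Ver}=\LL_\psi(\textup{Tr}_{k(\!(u)\!)/k(\!(t)\!)}\varphi)$ as characters of all of $I$; this is a standard consequence of the identification $H^1(\Gal(\Ksep|K),\IF_p)\cong K/\wp(K)$ together with corestriction being induced by the trace. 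The paper instead argues elementarily via the Schur--Zassenhaus splitting $I=P\rtimes H$ of Lemma \ref{inertiasplit}: every $\sigma\in H$ is a $p$-th power in $H$, so both $\chi\circ V_{I(r)}^I$ and $\LL_\psi(\textup{Tr}\,\varphi)$, being valued in $\mu_p(\Qlb)$, are trivial on $H$, and on $P\subset I(r)$ the transfer is literally the product of conjugates. Your route buys a cleaner conceptual statement and avoids the projection-formula reduction with its choice of $r$-th root of $\K$, at the cost of importing the class-field-theoretic corestriction--trace compatibility; the paper's route is self-contained but leans on the splitting of the inertia sequence. Both are complete proofs, and your sign and tame-matching bookkeeping (identifying $\det\K$ on $\Itame$ via $\gamma\mapsto\det\K(\gamma_u)$) agrees with the convention implicit in the paper's statement.
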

\begin{proof}
The representation $\rho$ is induced from the unique normal subgroup $I(r)$ of $I$. Using the projection formula we reduce to the case $[r]_*\LL_\psi(\varphi(u))$. Denote by $\chi$ the character corresponding to $\LL_\psi(\varphi(u))$. 
By \cite[Prop. 13.15.]{Curt90} we have
\[\det\Ind_{I(r)}^I (\chi)=\varepsilon_{I\rightarrow I(r)}\cdot(\chi\circ V_{I(r)}^I) \]
where $\varepsilon_{I\rightarrow I(r)}(\sigma)$ is the sign of the permutation induced by $\sigma$ on $I/I(r)$ and $V_{I(r)}^I$ is the transfer map. We refer to \cite[13.10]{Curt90} for the definition of the transfer map. To compute the character 
\[\varepsilon_{I\rightarrow I(r)}:I\rightarrow \Qlb^* \]
first note that since $I(r)$ is normal the permutation representation $\pi:I\rightarrow S_r, \sigma\mapsto \pi_\sigma$ on $I/I(r)$ factors through $I/I(r)\cong \mu_r(k)$.
We therefore have the following commutative diagram
\[\xymatrix{I \ar[rr]^{\varepsilon_{I\rightarrow I(r)}}\ar[dr] & & \Qlb^* \\
 & \mu_r(k) \ar[ur] & 
 }\]
and we denote the map $ \mu_r(k)\rightarrow \Qlb^*$ also by $\varepsilon_{I\rightarrow I(r)}$. Choose representatives $g_i$ of $I/I(r)$ for $i=0,...,r-1$ in such a way that the image of $g_i$ in $\mu_r(k)$ is $\zeta_r^{i}$ where $\zeta_r$ is a primitve $r$-th root of unity. In this case the permutation associated to $g_i$ is $\pi_i(j)=j+i \mod r$. Now $\varepsilon_{I\rightarrow I(r)}(g_1)=\textup{sgn}(\pi_1)=(-1)^{r-1}$. We can view $\varepsilon_{I\rightarrow I(r)}$ as a map $\Itame\rightarrow \Qlb^*$ and we see that $\varepsilon_{I\rightarrow I(r)}(\zeta)=(-1)^{r-1}$ where $\zeta$ denotes the topological generator of $\Itame$. Hence $\varepsilon_{I\rightarrow I(r)}=\chi_2^{r-1}$ where $\chi_2$ is the unique quadratic character. It remains to compute $\phi:=\chi\circ V_{I(r)}^I:I\rightarrow \Qlb^*$. Note that for $\sigma\in I(r)$ we have $\chi_{\varphi(u)}(g_i^{-1}\sigma g_i)=\chi_{\varphi(\zeta_r^iu)}(\sigma)$. By the definition of transfer 
\[V_{I(r)}^I(\sigma)=\prod_{i=0}^{r-1} g_{\pi_\sigma(i)}^{-1}\sigma g_i.\]
Recall that the sequence 
\[1\rightarrow P \rightarrow I\rightarrow \Itame \rightarrow 1 \]
splits by the profinite Schur-Zassenhaus theorem and that we have a subgroup $H\subset I$ which is isomorphic to $\Itame$ such that $I=PH$ and $H\cap P =1$. Let $\sigma\in H$. We have $\sigma=\tau^p$ for some $\tau$ as every element in $H$ is a $p$-th power. Therefore we find 
\[\phi(\sigma)=\chi(V_{I(r)}^I(\sigma))=\chi((V_{I(r)}^I(\tau))^p)=1. \]
For a general element $\sigma \in I$ we have $\sigma=\sigma_P\sigma_H$ with $\sigma_P\in P$ and $\sigma_H\in H$. Since we have $P\subset I(r)$ and the Artin-Schreier character $\LL_\psi(\textup{Tr} \varphi(u))$ is also trivial on $H$ we compute
\[\phi(\sigma)=\phi(\sigma_P)\phi(\sigma_H)=\chi\left(\prod_{i=0}^{r-1} g_i^{-1}\sigma_P g_i\right)=\LL_\psi(\textup{Tr}\varphi(u))(\sigma_P)=\LL_\psi(\textup{Tr}\varphi(u))(\sigma). \]
Here we used the additivity
\[\bigotimes_{i=0}^{r-1} \LL_\psi(\varphi(\zeta_r^i u))\cong \LL_\psi(\textup{Tr}\varphi(u))\]
of the Artin-Schreier sheaf. We have therefore computed both factors of the determinant, proving the claim.
\end{proof}
\begin{cor}
Suppose that in the situation of the above proposition $s<r$. The sheaf
\[\det([r]_*(\LL_\psi(\varphi(u))\otimes \K))\]
is tamely ramified. 
\end{cor}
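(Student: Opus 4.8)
The plan is to read off the three factors of the determinant from the preceding proposition and argue that each of them is tamely ramified. Recall that a rank one character of $I$ is tamely ramified precisely when it is trivial on the wild inertia subgroup $P$. By the proposition,
\[\det([r]_*(\LL_\psi(\varphi(u))\otimes \K)) = (\chi_2)^{(r-1)n}\cdot \chi_{n\textup{Tr}\,\varphi(t)}\otimes \det(\K).\]
Two of these three factors are manifestly tame: the power $(\chi_2)^{(r-1)n}$ of the quadratic character factors through $\Itame$, and $\det(\K)$ is tame because $\K$ is tamely ramified by hypothesis. Hence the whole determinant is tame as soon as the Artin--Schreier factor $\chi_{n\textup{Tr}\,\varphi(t)}$, associated to $\LL_\psi(n\textup{Tr}\,\varphi(t))$, is tame. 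Since $\LL_\psi(f)$ is tame (indeed trivial on $P$) exactly when $f$ has no pole at $t=0$, it suffices to show that $\textup{Tr}\,\varphi(u)=0$.

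The key computation is the trace itself. Writing $\varphi(u)=\sum_{j=1}^{s} a_{-j}u^{-j}$ and using that the extension $k(\!(t)\!)\subset k(\!(u)\!)$ with $u^r=t$ is Galois with group $\mu_r(k)$ acting by $u\mapsto \zeta u$, I would compute
\[\textup{Tr}\,\varphi(u)=\sum_{\zeta\in\mu_r(k)}\varphi(\zeta u)=\sum_{j=1}^{s} a_{-j}u^{-j}\sum_{\zeta\in\mu_r(k)}\zeta^{-j}.\]
Now the inner character sum $\sum_{\zeta\in\mu_r(k)}\zeta^{-j}$ vanishes whenever $r\nmid j$ and equals $r$ when $r\mid j$. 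The hypothesis $s<r$ guarantees that every exponent $j$ occurring satisfies $1\le j\le s<r$, so $r\nmid j$ for all of them, and therefore each inner sum is zero. Consequently $\textup{Tr}\,\varphi(u)=0$.

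Putting this together, $\chi_{n\textup{Tr}\,\varphi(t)}$ is the trivial character, hence tame, and the determinant is a product of tame characters and thus tamely ramified. There is no serious obstacle here; the only point requiring a little care is the standard fact that $\LL_\psi(f)$ is tame exactly when $f$ is regular (equivalently, that the Swan conductor of $\LL_\psi(f)$ equals the pole order of $f$), which reduces the entire statement to the vanishing of the trace, and the latter is immediate from $s<r$ via the orthogonality of roots of unity.
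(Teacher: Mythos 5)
Your proof is correct and follows essentially the same route as the paper: both reduce the statement to the vanishing of $\textup{Tr}\,\varphi(u)$ and establish it via the orthogonality relation $\sum_{\zeta\in\mu_r(k)}\zeta^{-j}=0$ for $r\nmid j$, which $s<r$ guarantees. The only cosmetic difference is that you treat all monomials $a_{-j}u^{-j}$ at once, while the paper reduces (by additivity of the trace) to the single term $a_{-s}u^{-s}$; your version also spells out explicitly why the other two factors of the determinant are tame, which the paper leaves implicit.
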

\begin{proof}
It is enough to prove the claim for $\varphi(u)=a_{-s}/u^s$. We have 
\[\textup{Tr}(\varphi(u))=a_{-s}\sum_{\zeta\in\mu_r(k)}(\zeta^s)^{-1}\frac{1}{u^s}.\]
The map
\[\mu_r(k)\rightarrow \mu_r(k), \zeta\mapsto \zeta^s \]
defines a non-trivial charater of $\mu_r(k)$, hence $\sum_{\zeta\in\mu_r(k)}(\zeta^s)^{-1}=0$. Therefore $\textup{Tr}(\varphi(u))=0$ and the sheaf is tamely ramified. 
\end{proof}
Proposition \cite[Prop. 3.8.]{Sa08} provides a detailed formula to compute tensor products of elementary connections $[r]_*(\E^\varphi\otimes R)$. A similar formula is true in our setting.
\begin{prop} \label{tensorinduced}
Let $\rho_i(u)=u^{r_i}$, $d=\gcd(r_1,r_2)$, $r_i'=r_i/d$, $\rho_i'(u)=u^{r_i'}$ and $\rho(u)=u^{\frac{r_1r_2}{d}}$. Suppose that $p$ does not divide either $r_1$ or $r_2$. For two polynomials $\varphi_1, \varphi_2\in \frac{1}{t}k[\frac{1}{t}]$ we set $\varphi^{(k)}(u)=\varphi_1(u^{r_2'})+\varphi_2((\zeta_{r_1r_2/d}^{k}u)^{r_1'})$ where $\zeta_{r_1r_2/d}$ is a primitive $\frac{r_1r_2}{d}$-th root of unity. In addition let $\K_1$ and $\K_2$ be tamely ramified $\ell$-adic local systems on $\eta$ and let $\K=(\rho_2')^*\K_1\otimes (\rho_1')^*\K_2$. We then have
\[\rho_{1,*}(\LL_\psi(\varphi_1(u))\otimes \K_1)\otimes \rho_{2,*}(\LL_\psi(\varphi_2(u))\otimes \K_2)\cong \bigoplus_{k=0}^{d-1} \rho_*(\LL_\psi(\varphi^{(k)}(u))\otimes \K). \]
\end{prop}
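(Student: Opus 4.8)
The plan is to realize both sheaves as induced representations and run Mackey's tensor product formula. Recall that $\rho_{i,*}(\LL_\psi(\varphi_i(u))\otimes \K_i)$ corresponds to $\Ind_{I(r_i)}^I(\LL_\psi(\varphi_i)\otimes\K_i)$, where $I(r_i)\subset I$ is the unique open subgroup of index $r_i$, attached to the subextension $k(\!(u_i)\!)/k(\!(t)\!)$ with $u_i^{r_i}=t$. Since $p\nmid r_i$ these subgroups are normal (the tame quotient $\Itame$ being abelian), and their intersection is the index-$\textup{lcm}(r_1,r_2)=r_1r_2/d$ subgroup $I(r_1r_2/d)$, i.e. the one attached to $\rho(u)=u^{r_1r_2/d}$.

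First I would set up Mackey. Because both $I(r_1)$ and $I(r_2)$ are normal, $\,{}^{g}I(r_2)=I(r_2)$ for all $g$, the double coset space $I(r_1)\backslash I/I(r_2)$ is the quotient group $I/I(r_1)I(r_2)=I/I(d)\cong\mu_d(k)$, and every Mackey term is induced from the same subgroup $I(r_1)\cap I(r_2)=I(r_1r_2/d)$. This yields $d$ summands
\[\Ind_{I(r_1r_2/d)}^I\big(\Res A\otimes\,{}^{g_k}\Res B\big),\qquad k=0,\dots,d-1,\]
where $A=\LL_\psi(\varphi_1)\otimes\K_1$, $B=\LL_\psi(\varphi_2)\otimes\K_2$, and the $g_k$ are coset representatives which I would choose to have image $\zeta_{r_1r_2/d}^{k}$ in $I/I(r_1r_2/d)\cong\mu_{r_1r_2/d}(k)$; these index distinct double cosets. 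Geometrically this is the statement that the fiber product $\eta\times_\eta\eta$ of the two covers $[r_1],[r_2]$ splits into $d$ copies of the punctured disc with coordinate $u$, $u^{r_1r_2/d}=t$, and one may prefer to run the whole argument through the projection formula and proper base change around this fiber product rather than citing Mackey for profinite (infinite-image) representations.

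Next I would compute each term by tracking coordinate substitutions. Restricting $A$ from $I(r_1)$ to $I(r_1r_2/d)$ is pullback along $u_1=u^{r_2'}$, so $\Res A=\LL_\psi(\varphi_1(u^{r_2'}))\otimes(\rho_2')^*\K_1$; likewise $\Res B=\LL_\psi(\varphi_2(u^{r_1'}))\otimes(\rho_1')^*\K_2$. Applying the twist $\,{}^{g_k}$ uses the identity ${}^{g}\LL_\psi(\phi(u))\cong\LL_\psi(\phi(gu))$ recorded earlier in the excerpt, with $g_k$ acting by $u\mapsto\zeta_{r_1r_2/d}^{k}u$; this turns $\LL_\psi(\varphi_2(u^{r_1'}))$ into $\LL_\psi(\varphi_2((\zeta_{r_1r_2/d}^{k}u)^{r_1'}))$. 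Crucially, the conjugation acts trivially on the tame factor $(\rho_1')^*\K_2$ because $\Itame$ is abelian, so the tame part of every summand equals $\K=(\rho_2')^*\K_1\otimes(\rho_1')^*\K_2$ independently of $k$. Multiplying the two Artin-Schreier characters and using $\LL_\psi(a)\otimes\LL_\psi(b)\cong\LL_\psi(a+b)$ produces $\LL_\psi(\varphi^{(k)}(u))\otimes\K$ with $\varphi^{(k)}$ exactly as defined, and re-interpreting $\Ind_{I(r_1r_2/d)}^I$ as $\rho_*$ assembles the claimed direct sum.

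The main obstacle is the bookkeeping in this last step: one must pin down the coset representatives so that the deck action produces precisely the shifts $\zeta_{r_1r_2/d}^{k}$ appearing in $\varphi^{(k)}$ (that the chosen $g_k$ do index distinct double cosets, and that conjugation by $g_k$ restricts to $u\mapsto\zeta_{r_1r_2/d}^{k}u$ on $I(r_1r_2/d)$), and to verify that only the second factor is twisted while the tame sheaves are conjugation-invariant. Everything else is formal once Mackey, or equivalently the fiber-product base change, is in place.
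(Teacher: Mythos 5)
Your proposal is correct and follows essentially the same route as the paper: both realize the sheaves as $\Ind_{I(r_i)}^I$ of the corresponding characters, invoke Mackey's tensor product theorem with the double coset space $I(r_1)\backslash I/I(r_2)\cong \mu_d(k)$ and common subgroup $I(r_1)\cap I(r_2)=I(\frac{r_1r_2}{d})$, and track the conjugation action as $u\mapsto \zeta_{r_1r_2/d}^k u$ on the Artin--Schreier factors. The only (harmless) difference is that the paper first reduces to $\K_1=\K_2=\Qlb$ by the projection formula, whereas you carry the tame sheaves through the Mackey computation and note their conjugation-invariance directly.
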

\begin{proof}
The proof is an application of Mackey theory. First notice that because of the projection formula we can reduce to the case of $\K_1=\K_2=\Qlb$. We regard all the sheaves as representations of respective Galois groups 
\[\xymatrix{& I\ar@{-}[d] & \\
& I(d)\ar@{-}[dl]\ar@{-}[dr] & \\
I(r_1)\ar@{-}[dr] & & I(r_2)\ar@{-}[dl] \\
& I(\frac{r_1r_2}{d}). &
}\]
In this language we have to compute the tensor product of induced representations
\[V:=\Ind_{I(r_1)}^I \LL_\psi(\varphi_1) \otimes\Ind_{I(r_2)}^I \LL_\psi(\varphi_2). \]
We have $I(r_1)\cdot I(r_2)= I(d)$ and $I(r_1)\cap I(r_2)= I(\frac{r_1r_2}{d})$. In addition all these subgroups are normal, hence stable under conjugation and furthermore we have
\[I(r_1)\backslash I/I(r_2)\cong I(r_1)I(r_2)\backslash I\cong \mu_d(k).\]
We apply \cite[Thm. 10.18]{Curt90} for to obtain 
\[V\cong \bigoplus_{i=0}^{d-1}\Ind_{I(\frac{r_1r_2}{d})}^I\Big(\Res_{I(\frac{r_1r_2}{d})}^{I(r_1)}\LL_\psi(\varphi_1) \otimes \Res_{I(\frac{r_1r_2}{d})}^{I(r_2)}\LL_\psi(\varphi_2 \circ m_{\zeta^k})\Big) \]
where $m_\zeta(u)=\zeta u$ for a primitive $\frac{r_1r_2}{d}$-th root of unity $\zeta$. The representation
\[\Res_{I(\frac{r_1r_2}{d})}^{I(r_1)}\LL_\psi(\varphi_1) \otimes \Res_{I(\frac{r_1r_2}{d})}^{I(r_2)}\LL_\psi(\varphi_2\circ m_{\zeta^k})\]
is isomorphic to
\[\LL_\psi(\varphi_1\circ \rho_2')\otimes \LL_\psi(\varphi_2\circ\mu_{\zeta^k}\circ\rho_1')\cong \LL_\psi(\varphi^{(k)}),\]
hence translating back to sheaves yields the claim. 
\end{proof}
Consider the sheaf $[r]_*(\LL_\psi(\varphi(t))\otimes \K)$ where $r$ is a positive integer prime to $p$, $\K$ is an indecomposable tamely ramified sheaf and denote by $\rho$ its associated representation. Recall that by Lemma \ref{inertiasplit} for the wild inertia group $P$ of $I$ we have the exact sequence
\[1\rightarrow P\rightarrow I\rightarrow I^{\textrm{tame}}\rightarrow 1\]
where $P$ is the pro-$p$-Sylow subgroup and $\Itame$ is the maximal prime-to-$p$-quotient of $I$. In particular there is a subgroup $H\subset I$ such that $H\cong \Itame$ and $I\cong P\rtimes \Itame$. Recall that after a choice $\K^{1/r}$ of an $r$-th root of $\K$ we have 
\[[r]_*(\LL_\psi(\varphi(t))\otimes \K)\cong [r]_*(\LL_\psi(\varphi(t)))\otimes \K^{1/r}. \]
We want to compute 
\[\Res_H^I \Ind_{I(r)}^I \LL_\psi(\varphi(t)) \]
to obtain the tame monodromy of the induced Artin-Schreier sheaf. By the Mackey Subgroup Theorem \cite[Thm. 10.13]{Curt90} we have
\[\Res_H^I \Ind_{I(r)}^I \LL_\psi(\varphi(t))  \cong \bigoplus_{x\in I/I(r)H} \Ind_{I(r)\cap H}^H\Res_{I(r)\cap H}^{I(r)} \,^x \LL_\psi(\varphi(t)).\]
One can check that $I(r)\cap H=H(r)$ where $H(r)$ is the corresponding subgroup obtained through the Schur-Zassenhaus theorem for $I(r)$. 
Since $\LL_\psi(\varphi(t))$ is trivial on $p$-th powers in $I(r)$ and every element of $H(r)$ is a $p$-th power, 
\[\Res_{H(r)}^{I(r)} \,^x \LL_\psi(\varphi(t))=\one \]
is the trivial representation. Therefore 
\[\Res_H^I \Ind_{I(r)}^I \LL_\psi(\varphi(t))=\Res_H^I \Ind_{I(r)}^I \one.\]
As a representation of $H\cong \Itame$ the representation $\Ind_{I(r)}^I \one$ maps the topological generator to the cyclic permutation matrix $\mathbb{P}_r$ of dimension $r$. Restricting the representation $\rho$ corresponding to 
\[[r]_*(\LL_\psi(\varphi(t)))\otimes \K^{1/r}\]
to $H$ therefore yields the tame sheaf $\K^{1/r}\otimes \mathbb{P}_r $. This is the analogue of formal monodromy in differential Galois theory. \par
The exponential torus is a diagonal subgroup of the differential Galois group coming from the relations satisfied by the exponential factors of formal solutions to a $\KK$-connection, see \cite[Section 11.22.]{Zoladek06}. 
Denote by $\rho$ the representation $\Ind_{I(r)}^I(\LL_\psi(\varphi(u))\otimes \lambda)$
where $\lambda$ is a tamely ramified character of $I$. By the projection formula we have 
\[\Ind_{I(r)}^I(\LL_\psi(\varphi(u))\otimes \lambda) \cong \Ind_{I(r)}^I(\LL_\psi(\varphi(u)))\otimes \lambda^{1/r} \]
for any choice of $r$-th root of $\lambda$. Restricting the representation $\rho$ to the wild ramification subgroup $P\subset I(r)$ yields the diagonal shape
\[\rho|_{P}\cong \bigoplus_{\zeta\in \mu_r(k)} \LL_\psi(\varphi(\zeta t)).\]
In particular the image $T:=\rho(P)$ is a diagonal subgroup of the monodromy group. Noting that 
\[\LL_\psi(\varphi(t))\otimes \LL_\psi(\beta(t))=\LL_\psi(\varphi(t)+\beta(t))\]
 we obtain the same relations for the $\varphi(\zeta t)$ as in the differential setting. \par
The exponential torus provided a method to analyze of what form the exponential factors in the differential setting could be. This will almost carry over to this setting. The only instance where it does not is \cite[Lemma 5.3.]{Ja20} whose proof we have to modify. 
\begin{lem}
Let $\LL$ be an irreducible rigid $\ell$-adic local system with monodromy group $G_2$ on some open subset of $\PP^1$ with all slopes having numerator 1 and let $V_x$ be its local monodromy at some singularity $x$ of $\LL$. The pole order of any $\varphi$ appearing in the analogue of the Levelt-Turrittin decomposition of $V_x$ can only be $1$ or $2$.
\end{lem}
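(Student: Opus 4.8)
The plan is to reduce the statement to a numerical constraint coming from the structure of $G_2$ and then to exploit the self-duality that any $G_2$-local system enjoys, together with the hypothesis that all slopes have numerator $1$. Since the monodromy group is $G_2\subset \SO_7$, the representation $V=\LL$ is $7$-dimensional, orthogonally self-dual, and has trivial determinant. The first step is to invoke Corollary \ref{LT-pos} to write the local monodromy $V_x$ as a direct sum of indecomposable pieces of the form $[r]_*(\LL_\psi(\varphi)\otimes\K)$, where the slope of each such piece equals $\deg(\varphi)/r$ and hence, by the numerator-$1$ assumption, equals $1/r$ with $\deg(\varphi)=1$ after absorbing $r$ — more precisely, writing $\varphi\in u^{-1}k[u^{-1}]$ with $u^r=t$, the pole order of $\varphi$ in the variable $u$ and the covering degree $r$ together determine the slope as a fraction with numerator $1$. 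I would first pin down exactly which pole orders are compatible with the slope being $1/r$: the slope of $[r]_*(\LL_\psi(\varphi))$ is $s/r$ where $s=\deg\varphi$, so numerator $1$ forces $s$ and $r$ to share all but a factor of $1$, i.e. $\gcd(s,r)=s$ in the reduced fraction, which constrains $s$ heavily.

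Next I would bound the pole order from above using the rank constraint $\rk(\LL)=7<p$. Each induced summand $[r]_*(\LL_\psi(\varphi)\otimes\K)$ contributes $r\cdot\rk(\K)$ to the total rank $7$, so $r\le 7$. Combined with the slope being $1/r$ having numerator $1$, the possible slopes are among $1,\tfrac12,\tfrac13,\tfrac14,\tfrac15,\tfrac16,\tfrac17$. The content of the lemma is that the \emph{pole order} of the relevant $\varphi$ — which I read as the pole order $s$ of $\varphi$ in the uniformizer $u$ of the $r$-fold cover — is at most $2$. The key mechanism is self-duality: because $V\cong V^\vee$ as $I$-representations (orthogonal structure), the set of exponential factors $\{\varphi(\zeta u):\zeta\in\mu_r\}$ appearing across all summands must be stable under negation, since $[r]_*(\LL_\psi(\varphi)\otimes\K)^\vee\cong [r]_*(\LL_\psi(-\varphi)\otimes\K^\vee)$. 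I would use the exponential torus computed above, whose image $T=\rho(P)$ is a diagonal subgroup of $G_2$, to read off that the characters $\varphi(\zeta u)$ must satisfy the relations imposed by the root system of $G_2$: the weights of the $7$-dimensional representation of $G_2$ are the six short roots together with $0$, and these satisfy linear relations that force the exponential factors into a very rigid additive configuration.

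The heart of the argument, and the main obstacle, is translating the $G_2$ weight relations into a bound on the pole order. In the differential setting this is precisely \cite[Lemma 5.3.]{Ja20}, and the author flags that its proof must be modified here. The weights of the standard $7$-dimensional representation of $G_2$ span a $2$-dimensional space and satisfy $\varpi_1+\varpi_2+\varpi_3=0$ type relations among the nonzero ones; pulling these back through the exponential torus gives additive relations $\varphi_i+\varphi_j+\varphi_k=0$ among the leading polar parts of the exponential factors. If some $\varphi$ had pole order $\ge 3$, its leading term $a u^{-s}$ with $s\ge 3$ would have to cancel against the leading terms of other factors; but because the factors are Galois conjugate (indexed by $\mu_r$) and the slope numerator is $1$, the leading coefficients are scaled by roots of unity $a\zeta^{-s}$, and I would show that the $G_2$ relations cannot be satisfied by such a configuration unless $s\le 2$. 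Concretely, I would enumerate the admissible $(r,s)$ pairs with slope $1/r$ of numerator $1$ and rank $\le 7$, and for each check against the weight relations of $G_2$, ruling out $s\ge 3$.

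The modification relative to \cite{Ja20} is that in positive characteristic the exponential factors are polynomials in $u^{-1}$ with coefficients in $k$ rather than formal exponentials, and Galois conjugation replaces the analytic monodromy action; the additivity $\LL_\psi(\varphi)\otimes\LL_\psi(\beta)=\LL_\psi(\varphi+\beta)$ established above is what lets the same additive relations go through. The hard part will be verifying that the finite list of admissible numerical data, constrained simultaneously by $\rk\le 7$, slope numerator $1$, self-duality, and the $G_2$ weight relations, leaves no room for pole order exceeding $2$; I expect this to be a finite but delicate case analysis, and I would organize it by the covering degree $r$, treating $r=1$ (pole order $1$ or $2$ directly) and $r\ge 2$ (where Galois conjugacy pins down the leading coefficients) separately.
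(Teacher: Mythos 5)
Your strategy is essentially the paper's: reduce to a finite table of pairs $(s,r)$ (pole order, covering degree) allowed by the numerator-$1$ condition $s\mid r$ and the rank bound $r\cdot\rk(\K)\le 7$, observe that $\rho_x(P)$ must land in a maximal torus of $G_2$ acting on the $7$-dimensional representation with weights $0,\pm w_1,\pm w_2,\pm w_3$ satisfying $w_1+w_2=w_3$, translate this into additive relations of the form $\varphi(\zeta^i u)+\varphi(\zeta^j u)=\varphi(\zeta^k u)$ among the Galois-conjugate Artin--Schreier characters, and rule out $s\ge 3$ by comparing leading coefficients, which transform as $a_s\zeta^{-s}$ under $\mu_r(k)$. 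That is exactly the mechanism of the paper's proof, and your identification of the necessary modification relative to \cite{Ja20} (Artin--Schreier additivity replacing exponential factors) is also the one the paper makes.

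The gap is that your writeup stops precisely where the proof begins: the ``finite but delicate case analysis'' you defer \emph{is} the content of the lemma. The paper's table is $(s,r)\in\{(2,2),(2,4),(2,6),(3,3),(3,6),(4,4),(6,6)\}$; the cases $(4,4)$ and $(6,6)$ are excluded exactly as in \cite[Lemma 5.3]{Ja20}, and only $(3,3)$ and $(3,6)$ require new computations, which are short but not generic in the way your sketch suggests. For $(3,3)$ one uses that the summand $\Ind_{I(3)}^I(\LL_\psi(\varphi)\otimes\lambda)$ is not self-dual, so its dual also occurs and the wild inertia acts by $(x,y,z,x^{-1},y^{-1},z^{-1},1)$; since $\zeta^{-3}=1$ for all $\zeta\in\mu_3(k)$, every conjugate has the \emph{same} leading coefficient and each candidate relation collapses to $2a_3=a_3$, i.e.\ $a_3=0$. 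For $(3,6)$ self-duality first forces $a_2=0$, and $\zeta_6^{-3}=-1$ produces sign patterns so that the relations collapse to $a_3=0$ or $3a_3=0$ --- here the characteristic hypothesis $p>7$ enters (at minimum $p\ne 3$), a point your proposal never isolates even though you invoke $7<p$ only for the rank. Two further slips, neither fatal: the slope of a summand is $s/r$, not $1/r$, and your suggestion that one can reduce to $\deg\varphi=1$ ``after absorbing $r$'' is false in general, since only the leading term descends along subcovers while the lower-order terms obstruct such a rewriting (were it possible, the lemma would be vacuous); and for $r=1$ the numerator-$1$ hypothesis forces $s=1$, not ``pole order $1$ or $2$ directly.'' Had you executed the enumeration your plan would have succeeded, but as written the decisive verification is asserted rather than performed.
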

\begin{proof}
We have the following table of possible cases for the ramification order $r$ and for the pole order $s$.
\begin{center}
\begin{tabular}{c c c}
$s$ & $r$  \\ [3pt]
\hline  \\
$2$ & $2,4,6$ \\ [5pt]
$3$ & $3,6$ \\ [5pt]
$4$ & $4$ \\ [5pt]
$6$ & $6$   \\ [5pt]
\end{tabular}
\end{center}
All cases apart from $s=3$ and $r=6$ or $r=3$ are excluded in the same way as in the proof of \cite[Lemma 5.3.]{Ja20}. We will deal with these two remaining cases separately. Let us consider the case $s=3$ and $r=3$. The local monodromy of $V_x$ then contains a module of the form 
\[\Ind_{I(3)}^I (\LL_\psi(\varphi(u))\otimes \lambda) \]
where $\lambda$ is a tame character and 
\[\varphi(u)=a_3u^{-3}+a_2u^{-2}+a_1u^{-1}\]
with $a_3\neq 0$. This representation is not self-dual and therefore its dual also has to appear. This means that 
\[V_x\cong \Ind_{I(3)}^I (\LL_\psi(\varphi(u))\otimes \lambda) \oplus\Ind_{I(3)}^I (\LL_\psi(-\varphi(u))\otimes \lambda^\vee) \oplus \lambda' \]
for some tame character $\lambda'$. Denote by $\rho_x$ the homomorphism corresponding to $V_x$. A general element in $\rho_x(P_x(3))$ is of the form
\[(x,y,z,x^{-1},y^{-1},z^{-1},1).\]
To prove that there are elements not contained in $G_2(\Qlb)$ it is therefore enough to show that there is no relation $xy=z$, $xz=y$ or $yz=x$. This can be reformulated as follows. Let $\zeta_3$ be a primitive $3$-rd root of unity. We have to show that there is no relation 
\[\varphi(u)+\varphi(\zeta_3u)=\varphi(\zeta_3^2u)\]
and the other combinations respectively. Note that the coefficient of $u^{-3}$ in $\varphi(\zeta_3^iu)$ is the same for all $i$. Therefore any of these relations translates into $a_3+a_3=a_3$. Since $s=3$ we have $a_3\neq 0$ and hence there cannot be a relation of the above form. \par
The case $s=3$ and $r=6$ is similar. We consider a representation of the form
\[\Ind_{I(6)}^I \LL_\psi(\varphi(u))\otimes \lambda\]
with  $\varphi(u)=a_3u^{-3}+a_2u^{-2}+a_1u^{-1}$. This representation has to be self-dual which in turn forces $a_2=0$. In this case 
\[V_x\cong \Ind_{I(6)}^I (\LL_\psi(\varphi(u))\otimes \lambda) \oplus \lambda' \]
for  a tame character $\lambda'$. Let $\zeta_6$ be a primitive $6$-th root of unity. We have the following relations
\begin{align*}
\varphi(u)+\varphi(\zeta_6^3u)=0, \\
\varphi(\zeta_6 u)+\varphi(\zeta_6^4u)=0, \\
\varphi(\zeta_6^2u)+\varphi(\zeta_6^5u)=0.
\end{align*}
Therefore elements in $\rho_x(P_x(6))$ are of the form
\[(x,y,z,x^{-1},y^{-1},z^{-1},1).\]
As before we have to show that there are no relations $xy=z$, $xz=y$ or $yz=x$. In terms of the leading coefficient of $\varphi(\zeta_6^iu)$ for $i=1,2,3$ this translates into 
$a_3-a_3=a_3$, $a_3+a_3=-a_3$ and $-a_3+a_3=a_3$ respectively. Because the characteristic $p>7$ in all cases from these relations it would follow that $a_3=0$. But we have $a_3\neq 0$ because $s=3$. Therefore none of these relations are satisfied and we find elements in $\rho_x(P_x)$ which do not lie in $G_2(\Qlb)$.
\end{proof}
Theorem \ref{classloc} is now obtained by the following methods. The index of rigidity yields constraints on Swan conductors and dimensions of invariants of the local monodromy by using the results on tensor products and determinants. We obtain further constraints on the shape of the local monodromy by means of the analogues of exponential torus and formal monodromy. A case-by-case check of the remaining possibilities of the Levelt-Turittin-type decomposition of the local monodromy yields the classification theorem in the arithmetic setting. For a detailed proof we refer to \cite[Section 6]{Ja20}. The arguments are completely analogous after replacing all objects by their respective counterparts. 

\bibliographystyle{jk}
\bibliography{mybib}

\providecommand{\arxivref}[1]{\href{http://arxiv.org/abs/#1}{#1}}
\providecommand{\bysame}{\leavevmode\hbox to3em{\hrulefill}\thinspace}
\providecommand{\MR}{\relax\ifhmode\unskip\space\fi MR }
\providecommand{\MRhref}[2]{%
  \href{http://www.ams.org/mathscinet-getitem?mr=#1}{#2}
}
\providecommand{\href}[2]{#2}
\begin{thebibliography}{Ka3}

\bibitem[Ar]{Arinkin10}
\emph{D.~Arinkin}\emph{: }Rigid irregular connections on {$\mathbb{P}^1$}.
  Compos. Math. \textbf{146} (2010), 1323--1338.

\bibitem[BE]{BlochEsnault04}
\emph{S.~Bloch}, \emph{H.~Esnault}\emph{: }Local {F}ourier transforms and
  rigidity for {$\mathcal{D}$}-modules. Asian J. Math. \textbf{8} (2004),
  587--605.

\bibitem[CR]{Curt90}
\emph{C.~W. Curtis}, \emph{I.~Reiner}\emph{: }Methods of representation theory.
  {V}ol. {I}. John Wiley \& Sons, Inc., New York, 1990, With applications to
  finite groups and orders, Reprint of the 1981 original, A Wiley-Interscience
  Publication.

\bibitem[DR]{Dett10}
\emph{M.~Dettweiler}, \emph{S.~Reiter}\emph{: }Rigid local systems and motives
  of type {$G_2$}. Compos. Math. \textbf{146} (2010), 929--963, With an
  appendix by Michael Dettweiler and Nicholas M. Katz. \MR{2660679
  (2011g:14042)}

\bibitem[Fu1]{Fu10}
\emph{L.~Fu}\emph{: }Calculation of {$\ell$}-adic local {F}ourier
  transformations. Manuscripta Math. \textbf{133} (2010), 409--464.

\bibitem[Fu2]{Fu15}
\emph{L.~Fu}\emph{: }Etale cohomology theory. Revised ed.. Nankai Tracts in
  Mathematics \textbf{14}. World Scientific Publishing Co. Pte. Ltd.,
  Hackensack, NJ, 2015.

\bibitem[Fu3]{Fu19}
\emph{L.~Fu}\emph{: }Deformations and rigidity of {$\ell$}-adic sheaves. Adv.
  Math. \textbf{351} (2019), 947--966. \MR{3956001}

\bibitem[Hu]{Hu82}
\emph{J.~E. Humphreys}\emph{: }Arithmetic groups. Topics in the theory of
  algebraic groups. Notre Dame Math. Lectures \textbf{10}. Univ. Notre Dame
  Press, South Bend, Ind.-London, 1982, 73--99.

\bibitem[Ja]{Ja20}
\emph{K.~Jakob}\emph{: }Classification of rigid irregular {$G_2$}-connections.
  Proceedings of the London Mathematical Society \textbf{120} (2020), 831--852.

\bibitem[Ka1]{Ka88}
\emph{N.~M. Katz}\emph{: }Gauss sums, {K}loosterman sums, and monodromy groups.
  Annals of Mathematics Studies \textbf{116}. Princeton University Press,
  Princeton, NJ, 1988.

\bibitem[Ka2]{Ka90}
\emph{N.~M. Katz}\emph{: }Exponential sums and differential equations. Annals
  of Mathematics Studies \textbf{124}. Princeton University Press, Princeton,
  NJ, 1990.

\bibitem[Ka3]{Ka96}
\emph{N.~M. Katz}\emph{: }Rigid local systems. Annals of Mathematics Studies
  \textbf{139}. Princeton University Press, Princeton, NJ, 1996.

\bibitem[La]{Laumon87}
\emph{G.~Laumon}\emph{: }Transformation de {F}ourier, constantes d'\'equations
  fonctionnelles et conjecture de {W}eil. Inst. Hautes \'Etudes Sci. Publ.
  Math. (1987), 131--210.

\bibitem[Mi]{Mitschi96}
\emph{C.~Mitschi}\emph{: }Differential {G}alois groups of confluent generalized
  hypergeometric equations: an approach using {S}tokes multipliers. Pacific J.
  Math. \textbf{176} (1996), 365--405.

\bibitem[Sa]{Sa08}
\emph{C.~Sabbah}\emph{: }An explicit stationary phase formula for the local
  formal {F}ourier-{L}aplace transform. Singularities {I}. Contemp. Math.
  \textbf{474}. Amer. Math. Soc., Providence, RI, 2008, 309--330.

\bibitem[Wi]{Wils98}
\emph{J.~S. Wilson}\emph{: }Profinite groups. London Mathematical Society
  Monographs. New Series \textbf{19}. The Clarendon Press, Oxford University
  Press, New York, 1998.

\bibitem[Zo]{Zoladek06}
\emph{H.~Zoladek}\emph{: }The monodromy group. Instytut Matematyczny Polskiej
  Akademii Nauk. Monografie Matematyczne (New Series) [Mathematics Institute of
  the Polish Academy of Sciences. Mathematical Monographs (New Series)]
  \textbf{67}. Birkh\"auser Verlag, Basel, 2006.

\end{thebibliography}

\end{document}